\newtheorem{Th}{Theorem}
\newtheorem{lem}{Lemma}
\newtheorem{cor}{Corollary}
\newtheorem{prop}{Proposition}
\newtheorem{defi}{Definition}
\newtheorem{rem}{Remark}
\renewcommand{\phi}{\varphi}
\newcommand{\BMO}{\mathrm{BMO}}
\newcommand{\eps}{\varepsilon}
\newcommand{\BB}{\boldsymbol{B}_3}
\newcommand{\B}{\boldsymbol{B}}
\newcommand{\Bg}{\mathfrak{B}}
\newcommand{\diag}{b}
\newcommand{\conv}{\textup{conv}}
\newcommand{\intt}{\textup{int}}
\newcommand{\Sg}{\textup{Sg}}
\newcommand{\I}{\textup{I}}
\newcommand{\Omg}{{\Omega}_3}
\begin{document}
\selectlanguage{english}
\title{Bellman VS Beurling: sharp estimates of uniform convexity for~$L^p$ spaces}

\author{P. Ivanisvili\thanks{This paper was completed during a visit of the first author to the Hausdorff Research Institute for Mathematics (HIM) in the framework of the Trimester Program ``Harmonic Analysis and Partial Differential Equations''. He thanks HIM for the hospitality.} \and D. M. Stolyarov\thanks{supported by the Chebyshev Laboratory  (Department of Mathematics and Mechanics, St. Petersburg State University) RF Government grant 11.G34.31.0026, by JSC ``Gazprom Neft'', and by RFBR grant no. 11-01-00526.}
 \and P. B. Zatitskiy\thanks{supported by  the Chebyshev Laboratory  (Department of Mathematics and Mechanics, St. Petersburg State University) RF Government grant 11.G34.31.0026, by JSC ``Gazprom Neft'', by President of Russia grant for young researchers MK-6133.2013.1, by the RFBR (grant 13-01-12422 ofi\_m2, 14-01-00373\_A), and by SPbSU (thematic project 6.38.223.2014).}}

\maketitle
\begin{abstract}
We obtain the classical Hanner inequalities by the Bellman function method. These inequalities give sharp estimates for the moduli of convexity of Lebesgue spaces. Easy ideas from differential geometry help us to find the Bellman function using neither ``magic guesses'' nor calculations. 
\end{abstract}
\tableofcontents
\section{Classical results}
In 1936 Clarkson~\cite{Cl} introduced the notion of uniform convexity for normed spaces.
\begin{defi}
A normed space~$(X, \|\cdot\|)$ is said to be uniformly convex if for any~$\eps>0$ there exists~$\delta>0$ such that if~$x,y \in X$\textup, $\|x\|=\|y\|=1$\textup, and $\|x-y\|\geqslant \eps,$ then $\left\|\frac{x+y}{2} \right\|\leqslant 1- \delta$.
\end{defi}

In the same paper he proved that all Lebesgue spaces~$L^p$ are uniformly convex when~$p$ belongs to~$(1, +\infty)$. This statement is an elementary corollary of the following inequalities. Here and in what follows all the norms are the~$L^p$-norms.

\begin{Th}[\textbf{Clarkson inequalities, 1936}]
Let $\phi,\psi \in L^p$. If $p\in [2,+\infty),$ then
$$
2^{p-1} \big( \| \phi \|^p + \| \psi \|^p \big) \geqslant \| \phi + \psi \|^p + \| \phi - \psi \|^p. 
$$
If~$p \in (1,2],$ then
$$
2\big(\|\phi\|^p+\|\psi\|^p\big)^{q/p}\geqslant \|\phi+\psi\|^q+\|\phi-\psi\|^q,
$$
where $q=p/(p-1)$ is the exponent conjugate to~$p$.
\end{Th}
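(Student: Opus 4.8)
\medskip

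My plan is to treat the ranges $p\in[2,+\infty)$ and $p\in(1,2]$ separately. In both cases the real content is a \emph{pointwise} two-scalar inequality; the difference is that for $p\geqslant2$ the pointwise inequality lifts to $L^p$ by a plain integration, whereas for $p\leqslant2$ the outer exponent $q$ does not coincide with the exponent of integration, so one must pass through Minkowski's integral inequality. This mismatch is exactly what is responsible for the asymmetric shape of the two inequalities.

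\medskip
\noindent\textbf{The case $p\in[2,+\infty)$.} I would first prove the pointwise estimate
$$|a+b|^p+|a-b|^p\leqslant 2^{p-1}\big(|a|^p+|b|^p\big),\qquad a,b\in\mathbb{C},$$
and then substitute $a=\phi(x)$, $b=\psi(x)$ and integrate. For the pointwise estimate I would combine the parallelogram identity $|a+b|^2+|a-b|^2=2(|a|^2+|b|^2)$ with two elementary comparisons of the $\ell^p$- and $\ell^2$-norms on $\mathbb{R}^2$, both valid for $p\geqslant2$: $\|v\|_p\leqslant\|v\|_2$ (superadditivity of $t\mapsto t^{p/2}$ on $[0,+\infty)$) and $\|v\|_2\leqslant 2^{1/2-1/p}\|v\|_p$ (Hölder with exponents $p/2$ and $(p/2)'$). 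Applying the first bound to $v=(|a+b|,|a-b|)$, then the parallelogram identity, then the second bound to $v=(|a|,|b|)$, chains together to give precisely the constant $2^{p-1}$ (with equality when $|a|=|b|$).

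\medskip
\noindent\textbf{The case $p\in(1,2]$.} Here $q=p/(p-1)\geqslant2$, and the argument I have in mind rests on two ingredients. The first is the pointwise inequality
$$|a+b|^q+|a-b|^q\leqslant 2\big(|a|^p+|b|^p\big)^{q/p},\qquad a,b\in\mathbb{C}.\qquad(\star)$$
If $U(a,b)=\tfrac{1}{\sqrt2}\,(a+b,\,a-b)$ denotes the normalised $2\times2$ Hadamard transform (a unitary of $\mathbb{C}^2$), then $(a+b,a-b)=\sqrt2\,U(a,b)$ and $(\star)$ is exactly the sharp bound $\|U\|_{\ell^p(\mathbb{C}^2)\to\ell^q(\mathbb{C}^2)}\leqslant 2^{1/2-1/p}$ --- the Hausdorff--Young inequality on two points --- which I would obtain by Riesz--Thorin interpolation between the endpoints $\ell^1\to\ell^\infty$ (of norm $1/\sqrt2$) and $\ell^2\to\ell^2$ (of norm $1$). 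Alternatively, a convexity argument in the relative phase of $a$ and $b$ reduces $(\star)$ to the case $0\leqslant b\leqslant a$ of real numbers, hence, after normalising $a=1$, to the one-variable inequality $(1+x)^q+(1-x)^q\leqslant 2(1+x^p)^{q/p}$ on $x\in[0,1]$, whose two sides agree at both endpoints and which can be settled by a somewhat delicate calculus estimate. The second ingredient is Minkowski's integral inequality for the two-term family $h_1=\phi+\psi$, $h_2=\phi-\psi$: since $p\leqslant q$,
$$\Big(\|\phi+\psi\|^q+\|\phi-\psi\|^q\Big)^{1/q}=\Big(\|h_1\|^q+\|h_2\|^q\Big)^{1/q}\leqslant\Big(\int\big(|\phi+\psi|^q+|\phi-\psi|^q\big)^{p/q}\Big)^{1/p}.$$
Plugging $(\star)$ into the right-hand side bounds it by $2^{1/q}\big(\int(|\phi|^p+|\psi|^p)\big)^{1/p}=2^{1/q}\big(\|\phi\|^p+\|\psi\|^p\big)^{1/p}$, and raising to the power $q$ gives the claimed inequality with the sharp constant $2$.

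\medskip
The one genuinely nontrivial step is the pointwise inequality $(\star)$ for $p<2$; this is where I expect almost all of the work to be, the rest being bookkeeping with the parallelogram law, Hölder's and Minkowski's inequalities, and elementary $\ell^p$-norm comparisons on $\mathbb{R}^2$. What makes $(\star)$ delicate is that the \emph{sharp} constant must emerge from it, and it does so only because the convexity (equivalently, the interpolation) behind $(\star)$ and the direction of Minkowski's integral inequality cooperate precisely.
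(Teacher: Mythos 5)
Your proposal is correct, but there is nothing in the paper to compare it with directly: the paper never proves Theorem~1. The Clarkson inequalities are quoted there as classical background (with the reference to Clarkson's 1936 article), and the paper's own machinery --- the Bellman function $\BB$, minimal concave majorants, and the torsion/foliation analysis --- is aimed at the sharp Hanner inequalities and the modulus of convexity, not at this statement. Your argument is essentially the standard classical proof, and the details check out. For $p\geqslant 2$ the chain $\|v\|_p\leqslant\|v\|_2$, parallelogram identity, $\|v\|_2\leqslant 2^{1/2-1/p}\|v\|_p$, applied pointwise and then integrated, gives exactly the constant $2^{p-1}$. For $1<p\leqslant 2$, the Riesz--Thorin interpolation between $\ell^1\to\ell^\infty$ (norm $1/\sqrt2$) and $\ell^2\to\ell^2$ (norm $1$) for the normalised Hadamard transform does yield the pointwise bound $(\star)$ with the right constant, and the mixed-norm Minkowski step $\bigl(\|h_1\|^q+\|h_2\|^q\bigr)^{1/q}\leqslant\bigl(\int(|h_1|^q+|h_2|^q)^{p/q}\bigr)^{1/p}$ is legitimate precisely because $p\leqslant q$; raising to the power $q$ recovers the stated inequality with constant $2$. (If you prefer the elementary route to $(\star)$, the phase reduction you sketch is also fine: $|a+be^{i\theta}|^q+|a-be^{i\theta}|^q$ is a convex even function of $\cos\theta$, so the extremal case is real and nonnegative $a,b$.) As for what the two viewpoints buy: your route is short, self-contained and works directly in $L^p$, but --- as the paper emphasises --- for $p<2$ Clarkson's inequality does \emph{not} give the sharp modulus of convexity; the Bellman-function/foliation argument of the paper is built exactly to go beyond it and recover the sharp Beurling--Hanner bound, at the cost of the heavier geometric apparatus.
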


In a time, the question about the dependence of the biggest possible~$\delta$ on~$\eps$  arose. The function~$\delta(\eps)$ is called the modulus of uniform convexity. It appeared that the Clarkson inequality gives the answer to this question only for the case~$p\geqslant 2$, whereas the case~$p<2$ was left open. The sharp dependence~$\delta(\eps)$ had been found by Beurling, who made an oral report about this in Uppsala in 1945. His proof was later written down by Hanner (see~\cite{Ha}).

\begin{Th}[\textbf{Beurling, 1945; Hanner, 1956,  Hanner's inequalities}]\label{HannerINeq}
Let $\phi,\psi \in L^p$. If $p\in [2,+\infty),$ then
$$
\big(\|\phi\|+\|\psi\|\big)^p+\big|\|\phi\|-\|\psi\|\big|^p \geqslant \|\phi+\psi\|^p+\|\phi-\psi\|^p. 
$$
If~$p \in [1,2],$ then
$$
\big(\|\phi\|+\|\psi\|\big)^p+\big|\|\phi\|-\|\psi\|\big|^p \leqslant \|\phi+\psi\|^p+\|\phi-\psi\|^p. 
$$
\end{Th}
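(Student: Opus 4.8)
The plan is to reduce both inequalities of Theorem~\ref{HannerINeq} to a single concavity/convexity statement for a function of two variables and then invoke Jensen's inequality. \emph{First, reduce to nonnegative functions.} For $a,b\in\mathbb{R}$ the unordered pair $\{|a+b|,|a-b|\}$ equals $\{|a|+|b|,\,\big||a|-|b|\big|\}$, so $|a+b|^p+|a-b|^p=(|a|+|b|)^p+\big||a|-|b|\big|^p$ pointwise; integrating, the combination $\|\phi+\psi\|^p+\|\phi-\psi\|^p$ is unchanged when $(\phi,\psi)$ is replaced by $(|\phi|,|\psi|)$, while the other side of each inequality depends only on $\|\phi\|$ and $\|\psi\|$ and is therefore also unchanged. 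Hence we may assume $\phi,\psi\geqslant0$.

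\emph{Next, the auxiliary function.} On the closed quadrant $Q=\{(X,Y):X\geqslant0,\ Y\geqslant0\}$ define
\[
\Psi(X,Y)=\big(X^{1/p}+Y^{1/p}\big)^p+\big|X^{1/p}-Y^{1/p}\big|^p .
\]
Then $\Psi$ is positively homogeneous of degree one, and for $\phi,\psi\geqslant0$ one has $|\phi+\psi|^p+|\phi-\psi|^p=\Psi(\phi^p,\psi^p)$ pointwise; consequently $\|\phi+\psi\|^p+\|\phi-\psi\|^p=\int\Psi(\phi^p,\psi^p)$ and $\big(\|\phi\|+\|\psi\|\big)^p+\big|\|\phi\|-\|\psi\|\big|^p=\Psi\!\big(\|\phi\|^p,\|\psi\|^p\big)$. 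Thus both parts of the theorem amount to $\int\Psi(\phi^p,\psi^p)\leqslant\Psi\!\big(\int\phi^p,\int\psi^p\big)$ for $p\geqslant2$ and the reverse inequality for $p\in[1,2]$, which by Jensen's inequality (normalising the measure and using the homogeneity of $\Psi$ to absorb the normalising factor; the degenerate case $\phi\equiv0$ or $\psi\equiv0$ being an equality) follows once we show: \emph{$\Psi$ is concave on $Q$ when $p\geqslant2$ and convex on $Q$ when $p\in[1,2]$}.

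\emph{Proof of this concavity lemma.} By the symmetry $\Psi(X,Y)=\Psi(Y,X)$ it is enough to work on $\{X\geqslant Y\geqslant0\}$, and there homogeneity gives $\Psi(X,Y)=X\,g(Y/X)$ with $g(u)=(1+u^{1/p})^p+(1-u^{1/p})^p$ on $[0,1]$. Since the perspective $(X,Y)\mapsto Xg(Y/X)$ inherits concavity (resp.\ convexity) from $g$, we have to decide the sign of $g''$. Put $w=u^{1/p}$; differentiating,
\[
\frac{d}{du}\,g(u)=w^{\,1-p}\big[(1+w)^{p-1}-(1-w)^{p-1}\big],
\]
and a second differentiation in $w$, simplified by the telescopings $(1-w)^{p-1}+w(1-w)^{p-2}=(1-w)^{p-2}$ and $w(1+w)^{p-2}-(1+w)^{p-1}=-(1+w)^{p-2}$, yields
\[
\frac{d}{dw}\!\left(\frac{d}{du}\,g(u)\right)=(p-1)\,w^{-p}\big[(1-w)^{p-2}-(1+w)^{p-2}\big].
\]
Since $0<1-w<1+w$, the bracket has the sign of $p-2$; as $u\mapsto w=u^{1/p}$ is increasing this means $g'$ decreases for $p\geqslant2$ and increases for $p\in[1,2]$, i.e.\ $g$ is concave, resp.\ convex, on $[0,1]$. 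Finally the two symmetric pieces must be glued along the diagonal $X=Y$: there $\Psi(X,X)=2^pX$ and the one-sided gradients coincide (both equal $(2^{p-1},2^{p-1})$), so $\Psi$ is $C^1$ across the diagonal and hence concave (resp.\ convex) on all of $Q$; the only special case is $p=1$, where $\Psi(X,Y)=2\max(X,Y)$ is visibly convex.

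\emph{The main obstacle} is the concavity lemma — specifically pinning down the sign of $g''$. The computation is short once the telescoping is noticed, but mild care is needed near the ends of $[0,1]$ (for $p>2$ the function $g$ has a vertical tangent at $0$, which does no harm) and at the gluing across $X=Y$ (for $p=1$ there is a convex corner there, also harmless). Everything else is bookkeeping.
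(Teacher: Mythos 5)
Your argument is correct in substance, but it is a genuinely different proof from the one in the paper. What you give is, in essence, the classical ``guess-and-verify'' half of the Bellman-function method: for $p\geqslant 2$ your $\Psi$ is the candidate majorant $G(x_1,x_2,x_3)=\Psi(x_1,x_2)-x_3$ in disguise --- it is concave exactly when $\Psi$ is, and it coincides with the boundary values of $\BB$ on $\partial\Omg$ --- and your Jensen step is the same integration against a concave function as in the paper's Proposition~2, with the convex case of $\Psi$ giving the $p\in[1,2]$ direction symmetrically. The paper's declared aim is precisely to avoid producing such a function out of thin air: it characterizes $\BB$ as the minimal concave function with computable boundary data, reduces by homogeneity to the planar domain $\Omega$, computes the sign pattern of the torsion of the boundary curve, and uses Lemma~\ref{tor+-} together with Corollary~\ref{fol} to determine the foliation of $\Omega$ by chords; evaluating $\B$ on the symmetry axis then yields the sharp modulus of convexity, i.e.\ Theorem~\ref{HannerINeq} in the equal-norm case $\|\phi\|=\|\psi\|$, while the off-axis values for $p>2$ are deferred to the technique of \cite{IOSVZ}. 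So your route buys the complete two-sided inequality of Theorem~\ref{HannerINeq} by a short, self-contained computation whose starting point (the function $\Psi$ and its concavity/convexity) is unexplained, whereas the paper buys an explanation of where the answer comes from, at the price of heavier geometric machinery and of reaching, within this text, only the diagonal case of the inequality.

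Two small repairs are needed in your write-up. First, the bracket $(1-w)^{p-2}-(1+w)^{p-2}$ has the sign of $2-p$, not of $p-2$ (since $0<1-w<1+w$ and $t\mapsto t^{p-2}$ is increasing exactly when $p>2$); your stated conclusion --- $g'$ decreasing for $p\geqslant 2$ and increasing for $p\in[1,2]$ --- is the one consistent with the correct sign, so only that sentence needs fixing, not the lemma. Second, the identity $\{|a+b|,|a-b|\}=\{|a|+|b|,\big||a|-|b|\big|\}$ is a real-scalar fact, so as written your proof covers real-valued $\phi,\psi$; for complex $L^p$ an additional standard scalar estimate is required before integrating.
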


With these inequalities at hand, it is easy (see \cite{Ha}) to obtain an estimate for $\delta(\eps)$, which turns out to be sharp.

\begin{Th}\label{ClarksonBeurlingHanner}
\textbf{\textup{1) (Clarkson, 1936).}} If $p\in[2,+\infty),$ then the sharp constant~$\delta(\eps)$ for~$\eps\leqslant 2$ is given by the equality
$$
\delta(\eps)= 1- (1-(\eps/2)^p)^{1/p}.
$$
\textbf{\textup{2) (Beurling, 1945; Hanner, 1956)}}
 If~$p\in(1,2],$ then the sharp constant~$\delta(\eps)$ for $\eps\leqslant 2$ is given by the equality
$$
(1-\delta+\eps/2)^p+|1-\delta-\eps/2|^p=2.
$$
\end{Th}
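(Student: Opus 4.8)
The plan is to recognize the two asserted formulas as the answer to one extremal problem and to solve that problem using the inequalities already at hand. The first step is to rewrite the definition: for fixed $\eps$, the number $\delta(\eps)$ is the largest $\delta$ for which $\|\phi\|=\|\psi\|=1$ and $\|\phi-\psi\|\geqslant\eps$ force $\big\|\tfrac{\phi+\psi}{2}\big\|\leqslant 1-\delta$, so for $\eps\leqslant 2$ (when the constraint set is nonempty)
$$
1-\delta(\eps)\;=\;\sup\Big\{\,\Big\|\tfrac{\phi+\psi}{2}\Big\|\;:\;\phi,\psi\in L^p,\ \|\phi\|=\|\psi\|=1,\ \|\phi-\psi\|\geqslant\eps\,\Big\}.
$$
Thus there are two tasks: (a) bound this supremum from above by the value appearing in the statement, which shows the proposed $\delta(\eps)$ \emph{works}; and (b) exhibit pairs $(\phi,\psi)$ that realize the bound, which shows it is \emph{sharp}.

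For (a) with $p\in[2,+\infty)$ I would feed $\phi$ and $\psi$ into the Clarkson inequality for $p\geqslant 2$ stated above (equivalently, into Theorem~\ref{HannerINeq}, which coincides with it when $\|\phi\|=\|\psi\|$): since $\|\phi\|=\|\psi\|=1$ this reads $\|\phi+\psi\|^p+\|\phi-\psi\|^p\leqslant 2^p$, and with $\|\phi-\psi\|\geqslant\eps$ it yields $\big\|\tfrac{\phi+\psi}{2}\big\|^p\leqslant 1-(\eps/2)^p$. For $p\in(1,2]$ I would instead apply the second ($p\leqslant2$) Hanner inequality of Theorem~\ref{HannerINeq} to the pair $u=\tfrac{\phi+\psi}{2}$, $v=\tfrac{\phi-\psi}{2}$; since $u+v=\phi$ and $u-v=\psi$, writing $a=\|u\|$ and $b=\|v\|\geqslant\eps/2$ we get $(a+b)^p+|a-b|^p\leqslant\|\phi\|^p+\|\psi\|^p=2$. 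A little calculus then closes the case: for $p>1$ the map $t\mapsto(a+t)^p+|a-t|^p$ is nondecreasing on $[0,+\infty)$, and $s\mapsto(s+\eps/2)^p+|s-\eps/2|^p$ is strictly increasing on $[0,+\infty)$ from the value $2(\eps/2)^p\leqslant 2$ at $s=0$ up to $+\infty$, so it has a unique root $s_*\in[0,1]$ of $(s+\eps/2)^p+|s-\eps/2|^p=2$; hence $(a+\eps/2)^p+|a-\eps/2|^p\leqslant(a+b)^p+|a-b|^p\leqslant 2$ forces $a\leqslant s_*$. Putting $\delta(\eps)=1-s_*$ turns this into the displayed equation $(1-\delta+\eps/2)^p+|1-\delta-\eps/2|^p=2$.

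For (b) I would write down the classical extremizers, say in $L^p(0,1)$ or $\ell^p$. When $p\geqslant 2$, take $f,g$ with disjoint supports, $\|f\|^p=1-(\eps/2)^p$ and $\|g\|=\eps/2$, and set $\phi=f+g$, $\psi=f-g$; disjointness gives $\|\phi\|=\|\psi\|=1$, $\|\phi-\psi\|=\|2g\|=\eps$ and $\big\|\tfrac{\phi+\psi}{2}\big\|=\|f\|=(1-(\eps/2)^p)^{1/p}$, matching (a). When $p\in(1,2]$, split the space into $E\cup E^c$ with $|E|=|E^c|$ and put $\phi=c\,\mathbf{1}_{E}+d\,\mathbf{1}_{E^c}$, $\psi=d\,\mathbf{1}_{E}+c\,\mathbf{1}_{E^c}$ with $c=s_*+\eps/2$ and $d=s_*-\eps/2$; then $\|\phi\|^p=\|\psi\|^p=\tfrac12(|c|^p+|d|^p)=1$ by the defining equation of $s_*$, while $\|\phi-\psi\|=|c-d|=\eps$ and $\big\|\tfrac{\phi+\psi}{2}\big\|=\tfrac12|c+d|=s_*=1-\delta(\eps)$. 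Combining (a) and (b) gives the two formulas; the endpoint $\eps=2$ (where $s_*=0$, $\delta(2)=1$) is covered by the same computations.

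The invocations of Theorem~\ref{HannerINeq} and the explicit examples are essentially mechanical. The one point that needs genuine (if elementary) care is the monotonicity-and-uniqueness step for $p<2$: one must check that $(a+b)^p+|a-b|^p\leqslant 2$ with $b\geqslant\eps/2$ really does confine $a$ to $[0,s_*]$, keeping track of the absolute values (the subcases $a\lessgtr b$ and $s_*\lessgtr\eps/2$) and verifying that the root $s_*$ indeed lies in $[0,1]$, so that $\delta(\eps)\geqslant 0$. I expect this bookkeeping to be the main obstacle; everything else follows directly from the already-proved inequalities.
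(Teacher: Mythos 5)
Your proposal is correct, but it follows a genuinely different route from the paper. What you write out is essentially the classical Beurling--Hanner argument: you take the Clarkson and Hanner inequalities (Theorems 1 and 2, which the paper states but does not reprove at that point) as a black box, deduce the upper bound $1-\delta(\eps)\geqslant\sup\|\tfrac{\phi+\psi}{2}\|$ from them (for $p\leqslant 2$ via the substitution $u=\tfrac{\phi+\psi}{2}$, $v=\tfrac{\phi-\psi}{2}$ and the monotonicity of $t\mapsto(a+t)^p+|a-t|^p$), and then verify sharpness with the standard extremizers: disjointly supported $f\pm g$ for $p\geqslant 2$, and the two-valued symmetric pair $c\,\mathbf{1}_E+d\,\mathbf{1}_{E^c}$, $d\,\mathbf{1}_E+c\,\mathbf{1}_{E^c}$ for $p\leqslant 2$. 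All the steps you flag as bookkeeping do check out (in particular $s_*\in[0,1]$ since $(1+\eps/2)^p+|1-\eps/2|^p\geqslant 2$), so the argument is sound; this is exactly the route the paper attributes to Hanner and points to via the citation \cite{Ha} rather than reproducing. The paper's own derivation of the same formulas is entirely different: it never invokes Theorems 1--2, but instead computes the Bellman function $\BB(1,1,t)$ on the diagonal via minimal concave functions, torsion signs, and the resulting foliation of $\Omega$, and then extracts $\delta(\eps)$ from $2^p(1-\delta(\eps))^p=\sup_{t\in[\eps^p,2^p]}\BB(1,1,t)$ (one can check the implicit parametric answer for $p<2$ reduces algebraically to your equation $(1-\delta+\eps/2)^p+|1-\delta-\eps/2|^p=2$). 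Your approach buys brevity and elementarity, but at the cost of using the ``magic'' inequalities whose unexplained origin is precisely what the paper aims to avoid; the paper's approach is longer but self-contained, explains where the extremal configurations come from geometrically, and yields the sharp constants without guessing the Hanner inequalities in advance.
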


Beurling's proof, given in~\cite{Ha}, is elementary and brilliant. Its main difficulty, in our opinion, is hidden in the magic inequalities presented in Theorem~\ref{HannerINeq} that are used as a black box, without any explanation of their origin. The purpose of the present article is to show, using the Bellman function method, how can one obtain the answer without ``magic guesses'', but following easy and natural geometric considerations.


The idea of application of optimal control methods to the problems lying at the intersection of mathematical analysis and probability belongs to Burkholder. In his groundbreaking paper~\cite{Burkholder}, Burkholder used these ideas to compute the norm of a martingale transform. Nazarov, Treil, and Volberg brought similar methods (already named the Bellman function) to harmonic analysis (see~\cite{NTV} for the historical rewiew). The article~\cite{Vasyunin} of Vasyunin on computation of sharp constants in the reverse H\"older inequality for Muckenhoupt classes initiated calculations of exact Bellman functions for problems in harmonic analysis. Starting with~\cite{SlavinVasyunin}, the method began to obtain theoretical basis (yet on the basic example of inequalities on the~$\mathrm{BMO}$-space). In~\cite{IOSVZ}, the authors developed the Bellman function theory that unifies rather wide class of problems (see also short report~\cite{SportReport}). It became clear that the computation of Bellman functions is not only analytic and algebraic problem. The geometry of the Bellman function graph also plays an important role (its convexity, the torsion of the boundary value curve, etc.).

\section{Bellman function method}

\subsection{Setting}
All infinite-dimensional~$L^p$-spaces are finitely representable in each other (see~\cite{DJT}, Theorem~$3.2$). Therefore, the moduli of uniform convexity are equal for them. We are going to discuss the uniform convexity of~$L^p([0,1])$ for $p \in (1,+\infty)$. Consider a bit more general problem, namely, estimate the maximum of $\|\phi+\psi\|$ with $\|\phi\|, \|\psi\|, \|\phi-\psi\|$ being fixed; here $\phi,\psi \in L^p$. For a fixed point~$x=(x_1,x_2,x_3) \in \mathbb{R}^3$ consider the  set
$$T(x)= \{(\phi,\psi)\in L^p\times L^p\colon \|\phi\|^p=x_1, \|\psi\|^p=x_2, \|\phi-\psi\|^p=x_3\}.$$
We define the Bellman function $\BB$ by formula
$$
\BB(x)=\sup\{\|\phi+\psi\|^p\colon (\phi,\psi)\in T(x)\}.
$$
Note that $T(x)$ is non-empty if and only if $x_1,x_2,x_3 \geqslant 0$ and the triple $(x_1^{\frac{1}{p}},x_2^{\frac{1}{p}},x_3^{\frac{1}{p}})$ satisfies the triangle inequality. Thus, the natural domain of $\BB$ is a closed cone
$$
\Omg=\{(x_1,x_2,x_3)\in \mathbb{R}^3\colon x_1,x_2,x_3 \geqslant 0,  (x_1^{\frac{1}{p}},x_2^{\frac{1}{p}},x_3^{\frac{1}{p}})\,\, \mbox{satisfies the triangle inequality}\}.
$$

It follows from the very definition that $\BB$ is homogeneous of order one:
$
\BB(k x)=k \BB(x)
$ 
for any $k\geqslant 0$ and $x \in \Omg$.

Note that the value of $\BB$ on the boundary of $\Omg$ can be calculated with ease. Indeed, if $\phi, \psi \in L^p$ and $x=(\|\phi\|^p,\|\psi\|^p,\|\phi-\psi\|^p)\in \partial\Omg$, then the Minkowski inequality for the functions $\phi, \psi, \phi-\psi$ is reduced to an equality. Three possibilities may occur:
\begin{itemize}
\item[1)] $x_1^{\frac1p}=x_2^{\frac1p}+x_3^{\frac1p}$, в этом случае $\BB(x)=\big(x_1^{\frac1p}+x_2^{\frac1p}\big)^p$;
\item[2)] $x_2^{\frac1p}=x_1^{\frac1p}+x_3^{\frac1p}$, в этом случае $\BB(x)=\big(x_1^{\frac1p}+x_2^{\frac1p}\big)^p$;
\item[3)] $x_3^{\frac1p}=x_1^{\frac1p}+x_2^{\frac1p}$, в этом случае $\BB(x)=\big|x_1^{\frac1p}-x_2^{\frac1p}\big|^p$.
\end{itemize}

\subsection{Properties of the Bellman function}
One of the main properties of the Bellman function~$\BB$ is its concavity.

\begin{prop}
The function $\BB$ is concave on $\Omg$.
\end{prop}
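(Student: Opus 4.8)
The plan is to establish concavity by the standard splitting (``Bellman induction'') argument, exploiting the self-similarity of $L^p([0,1])$. Fix $x', x'' \in \Omg$ and a weight $\alpha \in (0,1)$ (the endpoint cases $\alpha \in \{0,1\}$ are trivial), and set $x = \alpha x' + (1-\alpha)x''$. The goal is to manufacture, out of near-optimal competitors for $x'$ and for $x''$, a single competitor for $x$ whose value is the corresponding convex combination of values, so that $\BB(x) \ge \alpha \BB(x') + (1-\alpha)\BB(x'')$.

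First I would record the elementary rescaling fact: for $g \in L^p([0,1])$, the function $t \mapsto g(t/\alpha)$ lies in $L^p([0,\alpha])$ with $p$-th power norm $\alpha\|g\|^p$, and symmetrically on $[\alpha,1]$ with factor $1-\alpha$. Then, given $\eps > 0$, I pick $(\phi',\psi') \in T(x')$ and $(\phi'',\psi'') \in T(x'')$ with $\|\phi'+\psi'\|^p > \BB(x') - \eps$ and $\|\phi''+\psi''\|^p > \BB(x'') - \eps$ (near-optimizers, since the supremum defining $\BB$ need not be attained; note also that $\BB$ is finite because $\|\phi+\psi\|\le\|\phi\|+\|\psi\|$). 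Define $\Phi,\Psi \in L^p([0,1])$ by gluing the $\alpha$-rescaled copy of $(\phi',\psi')$ on $[0,\alpha]$ to the $(1-\alpha)$-rescaled copy of $(\phi'',\psi'')$ on $[\alpha,1]$. Since $\int|\cdot|^p$ is additive over disjoint supports, and since this same concatenation carries $\phi'\pm\psi'$ and $\phi''\pm\psi''$ to $\Phi\pm\Psi$, all the relevant $p$-th power norms split additively: $\|\Phi\|^p = \alpha x_1' + (1-\alpha)x_1'' = x_1$, $\|\Psi\|^p = x_2$, and $\|\Phi-\Psi\|^p = x_3$. Hence $(\Phi,\Psi) \in T(x)$ --- in particular $T(x)\neq\varnothing$, so $x\in\Omg$ and $\Omg$ is convex --- and
$$
\BB(x) \;\ge\; \|\Phi+\Psi\|^p \;=\; \alpha\|\phi'+\psi'\|^p + (1-\alpha)\|\phi''+\psi''\|^p \;>\; \alpha\BB(x') + (1-\alpha)\BB(x'') - \eps .
$$
Sending $\eps\to0$ yields the claimed concavity inequality.

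I do not expect a serious obstacle here: no inequality beyond additivity of $\int|\cdot|^p$ over disjoint supports is used, and there is no ``magic'' involved. The only point requiring care is bookkeeping --- ensuring that all three constraints $x_1,x_2,x_3$ split simultaneously under one and the same gluing, which happens precisely because concatenation commutes with the linear operations $(\phi,\psi)\mapsto\phi\pm\psi$ --- together with the harmless issue of non-attainment of the supremum, handled by the $\eps$-approximation above. As a minor bonus, convexity of $\Omg$ need not be assumed in advance, since the construction itself exhibits $x$ as a point of the domain.
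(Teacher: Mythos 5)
Your proposal is correct and is essentially identical to the paper's own proof: both take $\theta$-near-optimal (in your notation $\eps$-near-optimal) pairs for $x'$ and $x''$, concatenate them with weights $\alpha$ and $1-\alpha$ by rescaling onto $[0,\alpha]$ and $[\alpha,1]$, and use the additivity of $\int|\cdot|^p$ over disjoint supports to conclude. The extra observations you make (finiteness of $\BB$ via the triangle inequality, convexity of $\Omg$ coming for free from the same construction) are harmless additions, not deviations.
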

\begin{proof}
We have to prove that for any two points $x^{(1)},x^{(2)} \in \Omg$ and any $\alpha \in (0,1)$ the inequality
$$\BB(\alpha x^{(1)}+(1-\alpha)x^{(2)})\geqslant \alpha \BB(x^{(1)})+(1-\alpha)\BB(x^{(2)})$$
is fulfiled.
For any $\theta>0$ and $i=1,2$ we find a pair of functions $(\phi_i, \psi_i) \in T(x^{(i)})$ such that $\|\phi_i+\psi_i\|^p\geqslant \BB(x^{(i)})-\theta$. Consider the concatenation $\phi$ of the functions $\phi_1$ and $\phi_2$ with the weights $\alpha$ and $1-\alpha$ correspondingly, i.e. a function
$$
\phi(t)=
 \begin{cases}
  \phi_1(\frac{t}{\alpha}),& t \in [0,\alpha];\\
  \phi_2(\frac{t-\alpha}{1-\alpha}),& t \in (\alpha,1].
  \end{cases}
$$
We define the concatenation $\psi$ of the functions $\psi_1$ and $\psi_2$ with the weights $\alpha$ and $1-\alpha$ correspondingly in a similar way. Clearly, $(\phi,\psi)\in T(\alpha x^{(1)}+(1-\alpha)x^{(2)})$. Consequently, 
$$
\BB(\alpha x^{(1)}+(1-\alpha)x^{(2)})\geqslant \|\phi+\psi\|^p=\alpha \|\phi_1+\psi_1\|^p+(1-\alpha)\|\phi_2+\psi_2\|^p
\geqslant \alpha \BB(x^{(1)})+(1-\alpha)\BB(x^{(2)})-\theta.
$$
The number $\theta$  was arbitrary, so we get the desired concavity of $\BB$.
\end{proof}

The fucntion $\BB$ occurs to be the minimal function of the class of concave on $\Omg$ functions with given boundary conditions.

\begin{prop}
If $G \colon \Omg \to \mathbb{R}$ is a concave function and $G(x)\geqslant \BB(x)$ for all $x \in \partial\Omg,$ then $G(x)\geqslant \BB(x)$ for all $x \in \Omg$.
\end{prop}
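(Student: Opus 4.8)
My plan is to reduce the statement to the case of step functions, where it will follow from concavity alone, and then remove that restriction by approximation. So fix a point $x \in \Omg$. If $x \in \partial\Omg$, the inequality $G(x)\geqslant\BB(x)$ is precisely the hypothesis; hence assume $x \in \intt\Omg$. By the definition of $\BB$ it suffices to show that $G(x)\geqslant \|\phi+\psi\|^p$ for an arbitrary fixed pair $(\phi,\psi)\in T(x)$, since taking the supremum over such pairs then yields the claim.

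First I would treat the case where $\phi=\sum_k a_k\mathbf{1}_{E_k}$ and $\psi=\sum_k b_k\mathbf{1}_{E_k}$ are constant on the sets of a common finite measurable partition $\{E_k\}$ of $[0,1]$. Put $\lambda_k=|E_k|$ and $y_k=\big(|a_k|^p,|b_k|^p,|a_k-b_k|^p\big)$. Since $\|\cdot\|^p$ is additive over disjoint supports, $x=\sum_k\lambda_k y_k$ with $\sum_k\lambda_k=1$, i.e.\ $x$ is a convex combination of the $y_k$. The key observation is that each $y_k$ lies on $\partial\Omg$: for any two reals $a_k,b_k$ one of the three triangle inequalities among $|a_k|$, $|b_k|$, $|a_k-b_k|$ degenerates into an equality. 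Consequently $G(y_k)\geqslant\BB(y_k)\geqslant |a_k+b_k|^p$, the last step because the constant pair $(a_k,b_k)$ belongs to $T(y_k)$. Applying concavity of $G$ (Jensen's inequality for a finite convex combination) gives
$$
G(x)\;\geqslant\;\sum_k\lambda_k\, G(y_k)\;\geqslant\;\sum_k\lambda_k\,|a_k+b_k|^p\;=\;\|\phi+\psi\|^p .
$$

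Finally I would remove the step-function assumption. Approximate $\phi,\psi$ in $L^p$ by step functions $\phi_n,\psi_n$ subordinate to a common partition, set $x^{(n)}=(\|\phi_n\|^p,\|\psi_n\|^p,\|\phi_n-\psi_n\|^p)\in\Omg$, and note that $x^{(n)}\to x$ and $\|\phi_n+\psi_n\|^p\to\|\phi+\psi\|^p$. The previous step gives $G(x^{(n)})\geqslant\|\phi_n+\psi_n\|^p$, and the only point that really needs care is the passage to the limit: here one uses that $G$, being concave and real-valued, is automatically continuous on the open set $\intt\Omg$, so that $x^{(n)}\in\intt\Omg$ for large $n$ and $G(x^{(n)})\to G(x)$; the boundary points, where $G$ may fail to be continuous, were disposed of at the outset. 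Letting $n\to\infty$ gives $G(x)\geqslant\|\phi+\psi\|^p$, and the supremum over $(\phi,\psi)\in T(x)$ finishes the proof. The remaining ingredients — additivity of $\|\cdot\|^p$ over disjoint supports, the degeneracy of the triangle inequality for a single pair of reals, density of step functions in $L^p$, and Jensen's inequality — are all routine.
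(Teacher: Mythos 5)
Your argument is correct, and its core is the same as the paper's: the pointwise triples $\big(|a|^p,|b|^p,|a-b|^p\big)$ always lie on $\partial\Omg$ (one of the triangle inequalities degenerates for a single pair of reals), where $G\geqslant \BB\geqslant |a+b|^p$, and concavity of $G$ then lets you average. The difference is purely in how the averaging is performed: the paper applies Jensen's integral inequality to $G$ directly, in one line, writing $G(x)\geqslant \int_0^1 G\big(|\phi(t)|^p,|\psi(t)|^p,|\phi(t)-\psi(t)|^p\big)\,dt \geqslant \int_0^1 |\phi(t)+\psi(t)|^p\,dt$ for an arbitrary $(\phi,\psi)\in T(x)$, with no discretization and no approximation step. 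You instead use the finite form of concavity on step functions and then pass to the limit, invoking the automatic continuity of a concave function on $\intt\Omg$ and disposing of boundary points $x\in\partial\Omg$ separately. Your route is longer but slightly more scrupulous: it avoids applying the integral Jensen inequality to a concave $G$ that is only assumed finite (and hence possibly discontinuous, and of unexamined measurability along $t\mapsto G(\cdot)$, at boundary points), whereas the paper takes that application for granted. Both proofs are valid; yours trades brevity for a self-contained treatment of these regularity points.
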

\begin{proof}
Fix any point $x \in \Omg$ and an arbitrary pair of functions $(\phi,\psi) \in T(x)$. Then by Jensen's inequality
\begin{align*}
G(x)=&G\left(\int_0^1 |\phi(t)|^p dt,\int_0^1 |\psi(t)|^p dt,\int_0^1 |\phi(t)-\psi(t)|^p dt\right)\\
\geqslant&\int_0^1 G\left(|\phi(t)|^p, |\psi(t)|^p,|\phi(t)-\psi(t)|^p\right) dt\\ 
\geqslant& \int_0^1 \BB\left(|\phi(t)|^p, |\psi(t)|^p,|\phi(t)-\psi(t)|^p\right) dt\\
=&\int_0^1 |\phi(t)+\psi(t)|^p dt.
\end{align*}
Taking the supremum over all the pairs $(\phi,\psi) \in T(x)$, we obtain the inequality $G(x)\geqslant \BB(x)$.
\end{proof}

Thus, $\BB$ is the minimal among concave on $\Omg$ functions with fixed boundary conditions.

\subsection{Reduction of dimension}
The homogeneity of $\BB$ allows us to reduce the dimension of the problem. 
\begin{rem}\label{rem1}
Let $C$ be a convex cone in $\mathbb{R}^3$ with the vertex at zero. Let $L$ be a plane in $\mathbb{R}^3$ such that for any non-zero $x \in C$ there exists $k>0$ such that $k x \in L\cap C$. Let $G\colon C \to \mathbb{R}$ be a function that is homogeneous of order one. In such a case\textup, the concavity of $G$ on $C$ is equivalent to the concavity of $G$ on $C\cap L$.
\end{rem}
\begin{proof}
Obviously, if $G$ is concave on $C$, then it is also concave on $C\cap L$. Let us prove the converse. 

Suppose that $x_1, x_2 \in C$, $\alpha \in (0,1)$, and $x=\alpha x_1+ (1-\alpha) x_2$. Find the numbers $k,k_1,k_2>0$ such that $kx, k_1x_1, k_2x_2 \in L$. Note that $kx=\alpha\frac{k}{k_1}k_1x_1+(1-\alpha)\frac{k}{k_2}k_2x_2$. Using the concavity of $G$ on $L\cap C$, we get:
$$
G(xk)\geqslant \alpha\frac{k}{k_1}G(k_1x_1)+(1-\alpha)\frac{k}{k_2}G(k_2x_2).
$$
The first order homogeneity of $G$ leads to the wanted inequality
$$
G(x)\geqslant \alpha G(x_1)+(1-\alpha)G(x_2).
$$
\end{proof}

The role of the cone $C$ in our case is played by $\Omg$, the plane $\{x \in \mathbb{R}^3\colon x_1+x_2+x_3=1\}$ stands for $L$. By virtue of Remark~\ref{rem1}, the restriction of $\BB$ to $\Omg\cap L$ is a concave function, and moreover, the minimal among all the concave functions with the same boundary conditions on $\partial(\Omg\cap L)$.

Thus, the initial three-dimensional problem concerning the minimal concave function is reduced to the two-dimensional problem that looks like this.
Consider a convex set
\begin{equation}\label{Omega}
\Omega=\{(x_1,x_2)\in \mathbb{R}^2\colon (x_1,x_2,1-x_1-x_2)\in \Omg\},
\end{equation}
which is the projection of $\Omg\cap L$, and a function 
\begin{equation}\label{funB}
\B(x_1,x_2)=\BB(x_1,x_2,1-x_1-x_2)
\end{equation}
on it. 
The function $\B$ is concave on $\Omega$ and is the minimal among concave functions with fixed boundary values. In other words, if $G \colon \Omega \to \mathbb{R}$ is concave and $G\geqslant \B$ on $\partial\Omega$, then $G\geqslant \B$ on the whole domain $\Omega$.

We write down the values of $\B$ on $\partial\Omega$. The boundary $\partial\Omega$ consists of three parts that match three cases of degeneration in the triangle inequality. Namely, $\partial\Omega = \gamma^{[1]} \cup \gamma^{[2]}\cup \gamma^{[3]}$, where
\begin{align}
\label{gamma1} \gamma^{[1]}(s)&=\left(\frac{1}{s^p+(1-s)^p+1}, \frac{s^p}{s^p+(1-s)^p+1}\right), \quad &s \in [0,1];\\
\label{gamma2} \gamma^{[2]}(s)&=\left(\frac{(1-s)^p}{s^p+(1-s)^p+1}, \frac{1}{s^p+(1-s)^p+1}\right), \quad &s \in [0,1];\\
\label{gamma3} \gamma^{[3]}(s)&=\left(\frac{s^p}{s^p+(1-s)^p+1}, \frac{(1-s)^p}{s^p+(1-s)^p+1}\right), \quad &s \in [0,1].
\end{align}
The values of $\B$ on $\partial\Omega$ are given by the following equalities:
\begin{equation}\label{bv}
\B\left(\gamma^{[1]}(s)\right)=\frac{(1+s)^p}{s^p+(1-s)^p+1};\,
\B\left(\gamma^{[2]}(s)\right)=\frac{(2-s)^p}{s^p+(1-s)^p+1};\,
\B\left(\gamma^{[3]}(s)\right)=\frac{|1-2s|^p}{s^p+(1-s)^p+1}.
\end{equation}

\section{Minimal concave functions on convex compact sets}
In this section we discuss some properties of minimal concave functions on convex compact sets. Let~$\omega \subset \mathbb{R}^d$ be strictly convex compact set with non-empty interior (by strict convexity we mean that the boundary does not contain segments). Let $f \colon \partial \omega \to \mathbb{R}$ be a fixed continuous function. By the symbol $\Lambda_{\omega,f}$ we denote the set of all convex on $\omega$ functions~$G$ such that $G(x)\geqslant f(x)$ for all~$x \in \partial\omega$. Define for $x\in \omega$ the pointwise infimum by formula
$$
\Bg_{\omega,f}(x)=\inf\{G(x)\colon G\in \Lambda_{\omega,f}\}.
$$
Obviously, $\Bg_{\omega,f}\in \Lambda_{\omega,f}$, therefore, $\Bg_{\omega,f}$ is the minimal concave on $\omega$ function that majorizes $f$ on~$\partial\omega$. Note that $\Bg_{\omega,f}=f$ on $\partial\omega$, because in the opposite case we could have diminished the value~$\Bg_{\omega,f}$ on $\partial\omega$ keeping the concavity. This would have contradict the minimality.

The concavity of a function is equivalent to the convexity of its subgraph. The pointiwse minimality is equivalent to the minimality by inclusion of the subgraph. These simple considerations lead to the following conclusion.

\begin{prop}\label{subgraph}
Let
$$
\Sg(f)=\{(x,y) \in \partial\omega\times\mathbb{R}\colon y \leqslant f(x)\},\quad  \Sg(\Bg_{\omega,f})=\{(x,y) \in \omega\times\mathbb{R}\colon y \leqslant \Bg_{\omega,f}(x)\}
$$
be the subgraphs of $f$ and $\Bg_{\omega,f}$ correspondingly. In such a case\textup, 
$\Sg(\Bg_{\omega,f})=\conv(\Sg(f)),$ where $\conv$ stands for the convex hull.
\end{prop}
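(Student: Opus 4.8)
The plan is to establish the two inclusions $\conv(\Sg(f))\subseteq \Sg(\Bg_{\omega,f})$ and $\Sg(\Bg_{\omega,f})\subseteq\conv(\Sg(f))$ separately. The first one is immediate: since $\Bg_{\omega,f}\geqslant f$ on $\partial\omega$ (in fact $\Bg_{\omega,f}=f$ there), we have $\Sg(f)\subseteq\Sg(\Bg_{\omega,f})$, and the right-hand side is a convex set because $\Bg_{\omega,f}$ is concave; hence it contains the convex hull of $\Sg(f)$.

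For the reverse inclusion I would show that $K:=\conv(\Sg(f))$ is itself the subgraph of some concave function $g$ belonging to $\Lambda_{\omega,f}$. Once this is done, the minimality of $\Bg_{\omega,f}$ (it is the pointwise infimum over $\Lambda_{\omega,f}$) gives $\Bg_{\omega,f}\leqslant g$, whence $\Sg(\Bg_{\omega,f})\subseteq\Sg(g)=K$, and together with the first inclusion this yields the claimed equality. To produce $g$ I would check that $K$ is closed and convex, that it projects onto the whole of $\omega$, that it is bounded from above, and that it is downward closed in the last coordinate (if $(x,y)\in K$ and $y'\leqslant y$, then $(x,y')\in K$). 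Granting these, every vertical section $K_x=\{y\colon(x,y)\in K\}$ is a half-line $(-\infty,g(x)]$ with $g(x)\in\mathbb{R}$; the convexity of $K$ forces $g$ to be concave, and the inclusion $\Sg(f)\subseteq K$ gives $g\geqslant f$ on $\partial\omega$, so $g\in\Lambda_{\omega,f}$. Among these properties, convexity and downward closedness are formal, the projection equals $\conv(\partial\omega)=\omega$ (true for any compact convex set with non-empty interior, since the coordinate projection $\pi$ commutes with taking convex hulls), and boundedness from above follows from the boundedness of $f$ on the compact set $\partial\omega$.

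The only point that needs a genuine argument --- and the main obstacle --- is that $K$ is closed, since convex hulls of unbounded closed sets need not be closed. I would deal with this via the decomposition $\Sg(f)=\Gamma+R$, where $\Gamma=\{(x,f(x))\colon x\in\partial\omega\}$ is the graph of $f$ and $R=\{0\}\times(-\infty,0]$ is the downward ray. The graph $\Gamma$ is compact, being the continuous image of the compact set $\partial\omega$, so by Carath\'eodory's theorem $\conv(\Gamma)$ is compact; since $\conv(A+B)=\conv(A)+\conv(B)$ and $R$ is already convex, $K=\conv(\Gamma)+R$, a sum of a compact set and a closed set, hence closed. This same decomposition also makes the downward closedness, the boundedness from above, and the identification $K=\Sg(g)$ with $g(x)=\max\{t\colon(x,t)\in\conv(\Gamma)\}$ transparent, so the proof then assembles routinely.
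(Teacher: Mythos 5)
Your proof is correct and follows essentially the same route as the paper: the easy inclusion via convexity of the subgraph of the concave function $\Bg_{\omega,f}$, and the reverse inclusion by realizing $\conv(\Sg(f))$ as the subgraph of a function in $\Lambda_{\omega,f}$ and invoking minimality. The only difference is that you carefully justify the closedness of $\conv(\Sg(f))$ via the decomposition $\Sg(f)=\Gamma+R$ with $\Gamma$ compact, a step the paper merely asserts from the continuity of $f$ and compactness of $\omega$.
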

\begin{proof}
We note that the subgraph $\Sg(\Bg_{\omega,f})$ of a concave function $\Bg_{\omega,f}$ is a convex set, $ \Bg_{\omega,f}\geqslant f$ on~$\partial \omega$, thus $\Sg(\Bg_{\omega,f}) \supset \conv(\Sg(f))$. 

The function $f$ is continuous, $\omega$ is compact, therefore, the set $\conv(\Sg(f))$ is closed. We define the function $G$ on $\omega$ in such a way that its subgraph $\Sg(G)$ coincides with $\conv(\Sg(f))$. Clearly,~$G \in \Lambda_{\omega,f}$, as a result, $G \geqslant \Bg_{\omega,f}$ on $\omega$. But then $\Sg(\Bg_{\omega,f})\subset \Sg(G)=\conv(\Sg(f))$.
\end{proof}

\begin{cor}\label{fol}
For any point $x_0 \in \omega$ there exists a number $k \leqslant d+1$ and points $x_1, \dots, x_k \in \partial\omega$ such that
$x_0 \in \conv(x_1, \dots, x_k),$ and the function  $\Bg_{\omega,f}$ is linear on $\conv(x_1, \dots, x_k)$.
\end{cor}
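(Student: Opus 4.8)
The plan is to derive the statement from Proposition~\ref{subgraph} (which identifies the subgraph of $\Bg_{\omega,f}$ with the convex hull of the subgraph of $f$) together with a ``boundary'' version of the Carath\'eodory theorem in $\mathbb{R}^{d+1}$.

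First I would dispose of the trivial case $x_0\in\partial\omega$ by taking $k=1$, $x_1=x_0$, so assume $x_0\in\intt\omega$ and look at the point $P=(x_0,\Bg_{\omega,f}(x_0))$. Since $P$ lies on the graph of $\Bg_{\omega,f}$, it is a boundary point of the subgraph $\Sg(\Bg_{\omega,f})$, and by Proposition~\ref{subgraph} this subgraph equals $\conv(\Sg(f))$. Thus $P$ is a boundary point of the convex set $\conv(\Sg(f))$ and lies on some supporting hyperplane $H\subset\mathbb{R}^{d+1}$. The hyperplane $H$ cannot be ``vertical'' (otherwise its projection would be a supporting hyperplane of $\omega$ passing through the interior point $x_0$), so $H$ is the graph of an affine function $\ell$, and since $H$ supports the subgraph from above we get $\ell\geqslant\Bg_{\omega,f}$ on $\omega$.

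The next step is to feed $P$ through the Carath\'eodory theorem \emph{inside} $H$. The key observation is that if $a\leqslant c$ is the affine inequality defining the closed half-space with boundary $H$ that contains $\conv(\Sg(f))$, then for \emph{any} representation $P=\sum_j\alpha_jQ_j$ of $P$ as a convex combination with positive weights of points $Q_j\in\Sg(f)$ the chain $c=a(P)=\sum_j\alpha_ja(Q_j)\leqslant\sum_j\alpha_jc=c$ forces $a(Q_j)=c$, i.e.\ $Q_j\in H$, for every $j$. Hence $P\in\conv\big(\Sg(f)\cap H\big)$, and since $\Sg(f)\cap H$ sits in the $d$-dimensional affine space $H$, the Carath\'eodory theorem produces $k\leqslant d+1$ points $(x_1,y_1),\dots,(x_k,y_k)\in\Sg(f)\cap H$ and weights $\lambda_1,\dots,\lambda_k\geqslant0$, $\sum_i\lambda_i=1$, with $P=\sum_i\lambda_i(x_i,y_i)$. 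This already gives $x_0=\sum_i\lambda_ix_i\in\conv(x_1,\dots,x_k)$ with all $x_i\in\partial\omega$; and from $(x_i,y_i)\in\Sg(f)$ and $(x_i,y_i)\in H$ one reads $y_i\leqslant f(x_i)=\Bg_{\omega,f}(x_i)$ and $y_i=\ell(x_i)\geqslant\Bg_{\omega,f}(x_i)$, so that $y_i=\Bg_{\omega,f}(x_i)=\ell(x_i)$ for each $i$.

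Finally I would check linearity of $\Bg_{\omega,f}$ on $\Delta=\conv(x_1,\dots,x_k)$: for $x=\sum_i\mu_ix_i\in\Delta$ (with $\mu_i\geqslant0$, $\sum_i\mu_i=1$) the point $\big(x,\sum_i\mu_iy_i\big)=\sum_i\mu_i(x_i,y_i)$ lies in $\conv(\Sg(f))=\Sg(\Bg_{\omega,f})$, whence $\Bg_{\omega,f}(x)\geqslant\sum_i\mu_iy_i=\sum_i\mu_i\ell(x_i)=\ell(x)$, and combined with $\ell\geqslant\Bg_{\omega,f}$ this yields $\Bg_{\omega,f}=\ell$ on $\Delta$, which is affine. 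The only point where the argument is more than a routine invocation of Carath\'eodory is squeezing the number of points down to $d+1$ rather than $d+2$; this is exactly what is bought by $P$ being a \emph{boundary} point of $\conv(\Sg(f))$, so that $P$ and the points realizing it all lie in one $d$-dimensional supporting hyperplane.
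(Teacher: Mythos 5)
Your proof is correct. It rests on the same two pillars as the paper's argument --- Proposition~\ref{subgraph} together with the Carath\'eodory theorem, exploiting that $P=(x_0,\Bg_{\omega,f}(x_0))$ is a \emph{boundary} point of $\conv(\Sg(f))$ --- but you implement the two delicate steps differently. The paper gets the bound $k\leqslant d+1$ by remarking that a boundary point of the subgraph cannot lie in the interior of a hull of $d+2$ points of $\Sg(f)$, then takes $k$ minimal to force strictly positive weights and deduces linearity from concavity of $\Bg_{\omega,f}$ plus $\Bg_{\omega,f}(x_i)\geqslant y_i$ with equality of the convex combination. You instead introduce a supporting hyperplane $H$ at $P$ (checking it is non-vertical via $x_0\in\intt\omega$), show every convex representation of $P$ by points of $\Sg(f)$ must live in $H$, and apply Carath\'eodory inside the $d$-dimensional space $H$; the affine function $\ell$ whose graph is $H$ then gives linearity by a clean sandwich $\ell\geqslant\Bg_{\omega,f}\geqslant\ell$ on $\conv(x_1,\dots,x_k)$, with no appeal to minimality of $k$ or to concavity at that stage. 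What your route buys is a fully explicit justification of the two points the paper treats in one line each (the drop from $d+2$ to $d+1$, and linearity on the hull); the cost is the extra setup of the supporting hyperplane and the non-verticality check, which the paper's shorter argument avoids.
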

\begin{proof}
Note that the case where $x_0 \in \partial \omega$ is trivial. In the remaining cases, $x_0\in \intt(\omega)$. Let~$P_0=(x_0,\Bg_{\omega,f}(x_0))$. Due to Proposition~\ref{subgraph} we have $P_0\in \Sg(\Bg_{\omega,f})=\conv(\Sg(f))$, therefore, by the Carath\'eodory theorem about convex hull, $P_0$ belongs to the convex hull of not more than $d+2$ points of the set $\Sg(f)$. We note that $P_0 \in \partial \Sg(\Bg_{\omega,f})$, consequently, $P_0$ cannot lie inside the interior of the convex hull of $d+2$ points belonging to $\Sg(f)$. Therefore, there exists $k \leqslant d+1$ and the points $P_i=(x_i, y_i) \in \Sg(f), i=1,\dots, k$, such that $P_0 \in \conv(P_1, \dots, P_k)$. We may assume that the number $k$ is the smallest possible, i.e. for any $k'<k$ the point $P_0$ does not lie inside convex hull of any $k'$ points belonging to $\Sg(f)$. In such a case, there exist numbers~$\alpha_1, \dots, \alpha_k \in (0,1)$ such that $\sum \alpha_i=1$ and $P_0=\sum_{i=1}^k \alpha_i P_i$. We note that the function $\Bg_{\omega,f}$ is concave on $\conv(x_1,\dots,x_k)$, $\Bg_{\omega,f}(x_i)\geqslant f(x_i)\geqslant y_i$, but $\Bg_{\omega,f}(\sum_{i=1}^k\alpha_i x_i)=\sum_{i=1}^k\alpha_i y_i$. The numbers $\alpha_i$ are positive, thus, $\Bg_{\omega,f}(x_i)=f(x_i)=y_i$ for all $i =1, \dots, k$, and the function $\Bg_{\omega,f}$ is linear on $\conv(x_1, \dots, x_k)$. 
\end{proof}

\section{Torsion and foliation}
We return to the domain $\Omega$ in $\mathbb{R}^2$ defined by equality~\eqref{Omega}. Let $F\colon\partial\Omega \to \mathbb{R}$ be the restriction of~$\B$ to~$\partial\Omega$, given by formula~\eqref{funB}. Formula~\eqref{bv} together with formulas~\eqref{gamma1}, \eqref{gamma2}, and \eqref{gamma3} defines the function $F$ directly. We note that the function $F$ is continuous on $\partial\Omega$.
 With the notation of the previous section, $\B=\Bg_{\Omega,F}$. 

Direct computations show that when $p \in (1, +\infty)$, the piecewise parametrization~\eqref{gamma1},~\eqref{gamma2},~\eqref{gamma3} of the boundary $\partial\Omega$ appears to be $C^1$-smooth. Moreover, the function $F$, defined on $\partial\Omega$, is also $C^1$-smooth in this parametrization.

If $p=2$, then the function $F$ is nothing but the restriction of a linear function to $\partial\Omega$. Therefore, the function $\B$ is linear. In the case where $p\ne 2$ the situation is more complicated. By Corollary~\ref{fol}, the whole set $\Omega$ is covered by triangles and segments (in what follows, we call such segments chords) whose endpoints lie on $\partial\Omega$, on each of which the function $\B$ is linear. Our aim is to understand how is this covering arranged. The following key lemma will help us with this business (for the needed stuff from differential geometry, see, e.g. \cite{Pog}).

\begin{lem}\label{tor+-}
Let $\omega\subset \mathbb{R}^2$ be a strictly convex closed set. Let $a_1, a_2 \in \partial \omega,$ and the tangents to~$\omega$ at the points $a_1$ and $a_2$ intersect at the point $b$. Let $\I\subset \mathbb{R}$ be some open interval\textup{,} $\gamma \colon \I \to \partial \omega$ a parametrization of the part of $\partial\omega$ that contains the arc between $a_1$ and $a_2$ lying inside the triangle~$a_1ba_2$. Suppose $t_1,t_2 \in \I$ are such that $\gamma(t_i)=a_i,$ $i=1,2$. Assume that the curve $\gamma$ goes along $\partial\omega$ in the counter-clockwise direction and $t_2>t_1$.

Let $G$ be a concave function on $\omega,$ linear on the segment connecting $a_1$ and $a_2$. Let the curve~$(\gamma, G(\gamma))$ belong to  $C^1$ on $\I$. In such a case\textup, non of the following conditions may hold\textup{:}
\begin{itemize}
\item[1\textup)] the curve $(\gamma, G(\gamma))$ belongs to $C^3$ on $\I,$ its torsion is positive on $(t_1,t_2);$
\item[2\textup)] the curve $(\gamma, G(\gamma))$ belongs to $C^3$ on $\I,$ its torsion is negative on $(t_1,t_2);$
\item[3\textup)] there exists $t_0 \in (t_1,t_2)$ such that the curve $(\gamma, G(\gamma))$ belongs to the class $C^3$ on $\I\setminus\{t_0\},$ its torsion is negative on $(t_1,t_0)$ and positive on $(t_0,t_2)$.
\end{itemize}
\end{lem}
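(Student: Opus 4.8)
The plan is to reduce the statement, via the supporting plane of the convex subgraph of $G$ along the lifted chord, to a one–dimensional oscillation lemma, and then to kill each of the three cases with one integration by parts against a suitable multiple of a sine.

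\emph{Reduction.} Since $G$ is concave, its subgraph $\Sg(G)$ is convex, and since $G$ is affine on the chord $[a_1,a_2]$, the lifted chord $\sigma=\{(x,G(x)):x\in[a_1,a_2]\}$ is a genuine segment contained in $\partial\Sg(G)$. A supporting hyperplane of $\Sg(G)$ at the midpoint of $\sigma$ contains all of $\sigma$, and — because that midpoint projects to an interior point of $\omega$ — is not vertical; hence it is the graph $\{z=L(x)\}$ of an affine function $L$ with $G\le L$ on $\omega$ and $G=L$ on $[a_1,a_2]$. Put $g(t)=G(\gamma(t))-L(\gamma(t))$ for $t\in\I$. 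Then $g\le0$ on $\I$, $g(t_1)=g(t_2)=0$, and $(\gamma,g)$ is the image of $(\gamma,G(\gamma))$ under the affine map $(x,y,z)\mapsto(x,y,z-L(x,y))$ of $\mathbb R^{3}$, whose linear part has determinant $1$; so $(\gamma,g)$ has the same smoothness as $(\gamma,G(\gamma))$ and, at each point, torsion of the same sign. It therefore suffices to derive a contradiction from each of 1)--3) for the curve $(\gamma,g)$.

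\emph{The scalar quantity.} I would reparametrise the arc by the tangent angle $\theta$ (legitimate by strict convexity of $\omega$); $\theta$ increases with the counter-clockwise parameter, so $\theta_1:=\theta(t_1)<\theta_2:=\theta(t_2)$, and — since the tangents at $a_1,a_2$ meet at $b$ and the arc lies in the triangle $a_1ba_2$ — one has $0<\theta_2-\theta_1<\pi$. Write $\rho=ds/d\theta>0$ for the radius of curvature and let $u$ be the arc-length derivative $dg/ds$ regarded as a function of $\theta$ (dots below denote $\theta$-derivatives). A short Frenet-type computation of the triple product gives
\[
\det\big((\gamma,g)',(\gamma,g)'',(\gamma,g)'''\big)=\rho^{3}\big(\ddot u+u\big),
\]
so the torsion of $(\gamma,g)$ has the same sign as $\ddot u+u$ (the vector $(\gamma,g)'\times(\gamma,g)''$ being nonzero, its $\theta$–component is $\rho^{2}$). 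Because $g\le0$ on $\I$ with $g(t_1)=g(t_2)=0$, both $\theta_1,\theta_2$ are interior maxima of $g$, so where $g$ is $C^{2}$ — in particular near them — we have $g_s=0$ and $g_{ss}\le0$ there, i.e. $u(\theta_1)=u(\theta_2)=0$ and $\dot u(\theta_1)\le0$, $\dot u(\theta_2)\le0$.

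\emph{The oscillation argument, and where the difficulty lies.} For a constant $c$ put $v(\theta)=\sin(\theta-c)$, so $\ddot v+v=0$; integrating by parts on $(\theta_1,\theta_2)$ and using $u(\theta_1)=u(\theta_2)=0$ gives
\[
\int_{\theta_1}^{\theta_2}(\ddot u+u)\,v\,d\theta=\dot u(\theta_2)\,v(\theta_2)-\dot u(\theta_1)\,v(\theta_1),
\]
an identity that survives in case 3, where $\ddot u$ may jump at $\theta_0$, provided one takes $c=\theta_0$, for then $v(\theta_0)=0$. In case 1 take $c=\theta_1$: the left side is positive while the right side equals $\dot u(\theta_2)\sin(\theta_2-\theta_1)\le0$. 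In case 2 take $c=\theta_2$: then $\ddot u+u<0$ and $v<0$ on $(\theta_1,\theta_2)$, so the left side is positive, while the right side equals $-\dot u(\theta_1)\sin(\theta_1-\theta_2)\le0$. In case 3 take $c=\theta_0$: then $\ddot u+u$ and $v$ are both negative on $(\theta_1,\theta_0)$ and both positive on $(\theta_0,\theta_2)$, so the left side is positive, whereas $\dot u(\theta_2)\sin(\theta_2-\theta_0)\le0$ and $-\dot u(\theta_1)\sin(\theta_1-\theta_0)\le0$ (using $\sin(\theta_2-\theta_0)>0$, $\sin(\theta_1-\theta_0)<0$ from $0<\theta_2-\theta_1<\pi$), so the right side is $\le0$. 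Each case is a contradiction. The routine ingredients are the computation of $\ddot u+u$ and the inequalities $\dot u(\theta_i)\le0$; the bound $\theta_2-\theta_1<\pi$, which makes all the sines behave, is exactly where the hypothesis that the tangents meet at a point $b$ is used. The one genuinely delicate point is the choice of comparison function in the oscillation step, especially in case 3: the shift $c=\theta_0$ is forced twice over — it makes $(\ddot u+u)v$ keep a constant sign and it simultaneously annihilates the boundary term produced by the possible jump of $\ddot u$ at $\theta_0$.
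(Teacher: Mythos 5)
Your strategy is essentially the paper's argument in a different chart: subtract the supporting affine function $L$, turn the sign of the torsion of the lifted boundary curve into the sign of a second--order expression in a slope--type quantity $u$, and contradict the endpoint data $u=0$, $\dot u\le 0$ coming from $g=G\circ\gamma-L\circ\gamma\le 0$, $g(t_1)=g(t_2)=0$. The paper does this without ever inverting the Gauss map: after rotating so that $\gamma_1'>0$ it sets $v=\gamma_2'/\gamma_1'$, $u=f'/\gamma_1'$, shows that the torsion sign governs convexity/concavity of the planar curve $(v,u)$, and reads off the contradiction from $u(t_1)=u(t_2)=0$, $u'(t_i)\le 0$; your sine comparison is a Sturm--type reformulation of the same contradiction, with the hypothesis that the tangents meet entering as $\theta_2-\theta_1<\pi$ rather than as the possibility to rotate coordinates. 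Your identity $\det=\rho^3(\ddot u+u)$ and the cancellation of the $\theta_0$--boundary terms via $v(\theta_0)=0$ are correct.

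The genuine gap is the step ``reparametrise by the tangent angle $\theta$ (legitimate by strict convexity of $\omega$)''. Strict convexity only makes $\theta$ strictly monotone along the boundary; it does not give $d\theta/ds>0$, i.e.\ the curvature of $\partial\omega$ may vanish (an arc like $y=x^4$ is strictly convex), and where it vanishes $\rho=ds/d\theta$ is infinite and $u,\dot u,\ddot u$ as functions of $\theta$ are not defined. On the open intervals this can be rescued, but not by strict convexity: for a convex $C^3$ arc, at an interior point with $\gamma_1'\gamma_2''-\gamma_2'\gamma_1''=0$ this nonnegative quantity has vanishing derivative, so $\gamma_1'\gamma_2'''-\gamma_2'\gamma_1'''=0$ as well, the planar parts of $\gamma'',\gamma'''$ are parallel to $\gamma'$, and the whole $3\times 3$ determinant vanishes; hence nonvanishing torsion forces positive curvature on $(t_1,t_2)\setminus\{t_0\}$ --- an observation your argument needs but does not make. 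More seriously, the hypotheses say nothing about the torsion at $t_1$, $t_2$ (or at $t_0$), so the curvature may vanish exactly there; then $\dot u(\theta_i)=g_{ss}/\kappa$ may be infinite or fail to exist, and both the inequalities $\dot u(\theta_i)\le 0$ and the boundary terms of your integration by parts are unjustified as written --- and these are precisely what the contradiction rests on (without the endpoint derivative conditions, case 1 alone is not contradictory, as $u=-\eps\sin\bigl(\pi(\theta-\theta_1)/(\theta_2-\theta_1)\bigr)$ shows). The argument can be patched by a limiting treatment of the boundary terms or by formulating the endpoint conditions through $g$ rather than $\dot u$, but as written this step fails in configurations allowed by the hypotheses; the paper's choice of keeping the original parameter avoids the issue altogether.
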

\begin{proof}
Turn the first two coordinates and make a reparametrization, if needed, in such a fashion that the condition $\gamma_1'(t)>0$ when $t \in [t_1,t_2]$, where $\gamma=(\gamma_1,\gamma_2)$, is satisfied. The conditions of the lemma does not change when the domain and the curve undergo such transformations. 

Due to concavity of~$G$ on~$\omega$, we can find a linear function~$L \colon \mathbb{R}^2 \to \mathbb{R}$ such that $G \leqslant L$ on $\omega$ and $G=L$ on the segment $[a_1,a_2]$. Without loss of generality, we may assume that $L\equiv 0$ (if it not so, we may consider the function $G-L$ instead  of $G$, keeping the conditions of the lemma).

We introduce the notation $f(t)=G(\gamma(t)), v(t)=\frac{\gamma_2'(t)}{\gamma_1'(t)},  u(t)=\frac{f'(t)}{\gamma_1'(t)}$. The function $v$ is increasing due to the convexity of $\omega$. Direct computations show that the sign of the torsion of the curve $(\gamma(t),f(t))$ defines the convexity (or concavity) of the curve $(v(t),u(t))$:
\begin{equation}\label{tor}
u''v'-v''u'=\frac{1}{(\gamma_1')^3}
\begin{vmatrix}
\gamma_1'&\gamma_2'& f'\\
\gamma_1''&\gamma_2''&f''\\
\gamma_1'''&\gamma_2'''&f'''\\
\end{vmatrix}.
\end{equation}

The function $f$ defined on the interval $\I$ satisfies the inequality $f\leqslant 0$ and the equalities $f(t_1)=f(t_2)=0$. Thus, $f'(t_1)=f'(t_2)=0$ and $f''(t_1)\leqslant 0$, $f''(t_2)\leqslant 0$. It follows that 
\begin{equation}\label{eq5}
u(t_1)=u(t_2)=0, \quad u'(t_1) \leqslant 0, \quad u'(t_2)\leqslant 0.
\end{equation}

Now we treat each of the three cases by itself. In the first case, the curve $(\gamma,f)$ belongs to the class $C^3$ on $\I$ and its torsion is positive on $(t_1,t_2)$. Consequently, following formula~\eqref{tor}, the curve~$(v(t),u(t))$ must be strictly convex when $t \in (t_1,t_2)$. But this contradicts conditions~\eqref{eq5}. Similarly, in the second case the curve must be strictly concave, which also contradicts condition~\eqref{eq5}.

In the third case the curve $(v(t),u(t))$ is strictly concave when $t \in (t_1,t_0)$, $u(t_1)=0\geqslant u'(t_1)$, therefore, $u(t_0)<0$. On the other hand, the strict convexity of $(v(t),u(t))$ when $t \in (t_0, t_2)$ and the conditions $u(t_2)=0\geqslant u'(t_2)$ lead to the inequality $u(t_0)>0$, a contradiction. The lemma is proved.

\end{proof}

We are going to apply Lemma~\ref{tor+-} to the concave function $\B$ defined on a strictly convex set $\Omega$ in order to get an idea how can the chords be arranged. We have to compute the torsions $\tau^{[i]}(s)$ of the curves $(\gamma^{[i]}(s),\B(\gamma^{[i]}(s)))$:
\begin{align*}
\tau^{[1]}(s)&=-\frac{2( p-2)( p-1)^2 p^3((1-s)s(1+s))^{p-3}}{(s^p+( 1-s)^p+1)^4}, \quad &s\in(0,1);\\
\tau^{[2]}(s)&=\frac{2( p-2)( p-1)^2 p^3((1-s)s(2-s))^{p-3}}{(s^p+( 1-s)^p+1)^4}, \quad &s\in(0,1);\\
\tau^{[3]}(s)&=-\textup{sign}(1-2s)\frac{2( p-2)( p-1)^2 p^3((1-s)s|1-2s|)^{p-3}}{(s^p+( 1-s)^p+1)^4}, \quad &s\in\left(0,\frac{1}{2}\right)\cup\left(\frac{1}{2},1\right);
\end{align*}

These formulas show us the torsion signs of the graph of $F$ on $\partial\Omega$. When $p>2$, the inequalities $\tau^{[1]}(s)<0$, $\tau^{[2]}(s)>0$ when $s \in (0,1)$, $\tau^{[3]}(s)<0$ when $s \in (0,\frac{1}{2})$, and  $\tau^{[3]}(s)>0$ when $s \in (\frac{1}{2},1)$ hold. When $p<2$ all the signs in these inequalities are changed for opposite ones. The domain $\Omega$, the signs of the torsions of the corresponding curves, and the points where these torsions change sign are indicated on~Figure~\ref{torsign}
 
\begin{figure}[h!]
\includegraphics[height=6.5cm]{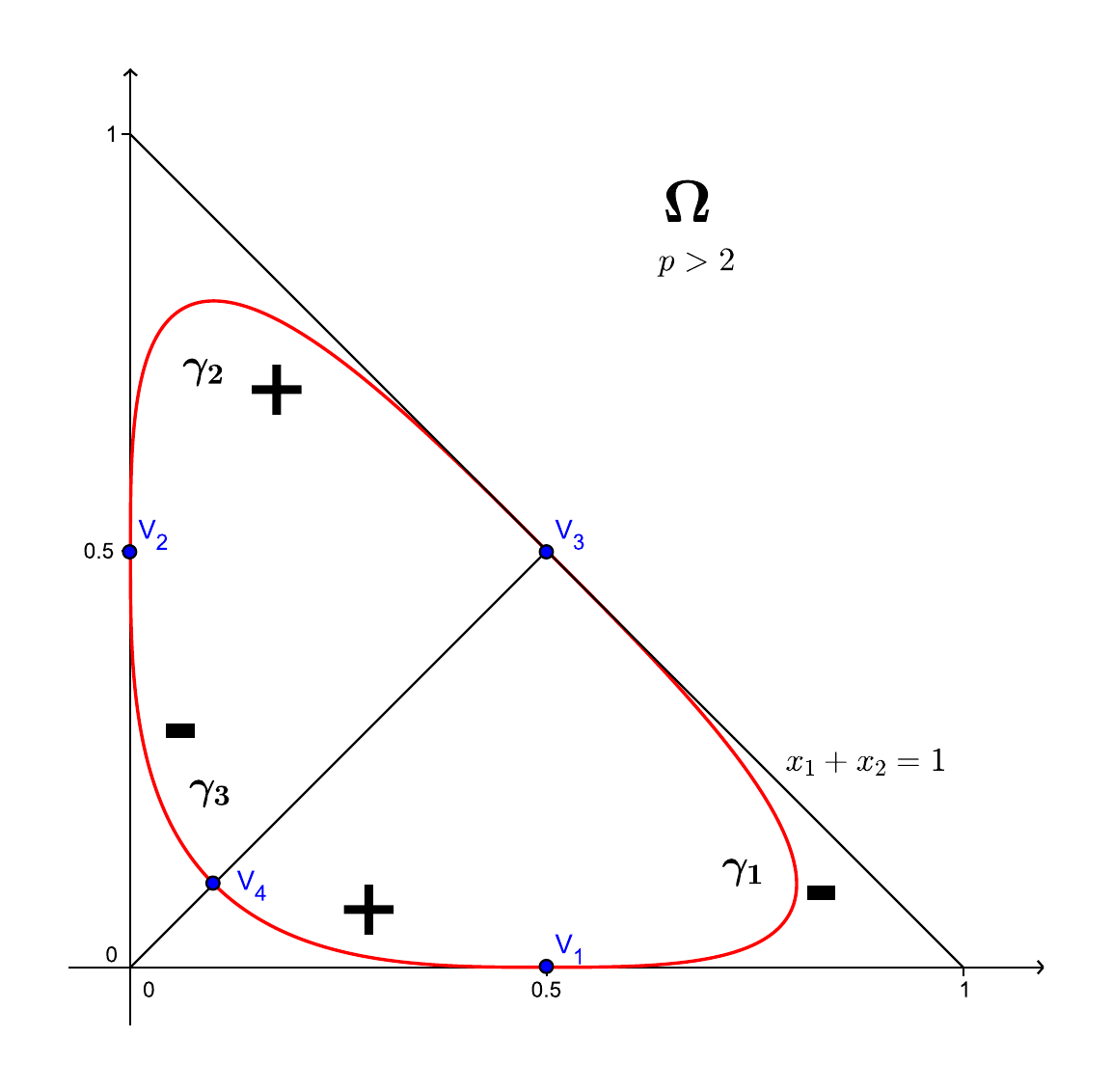}
\includegraphics[height=6.5cm]{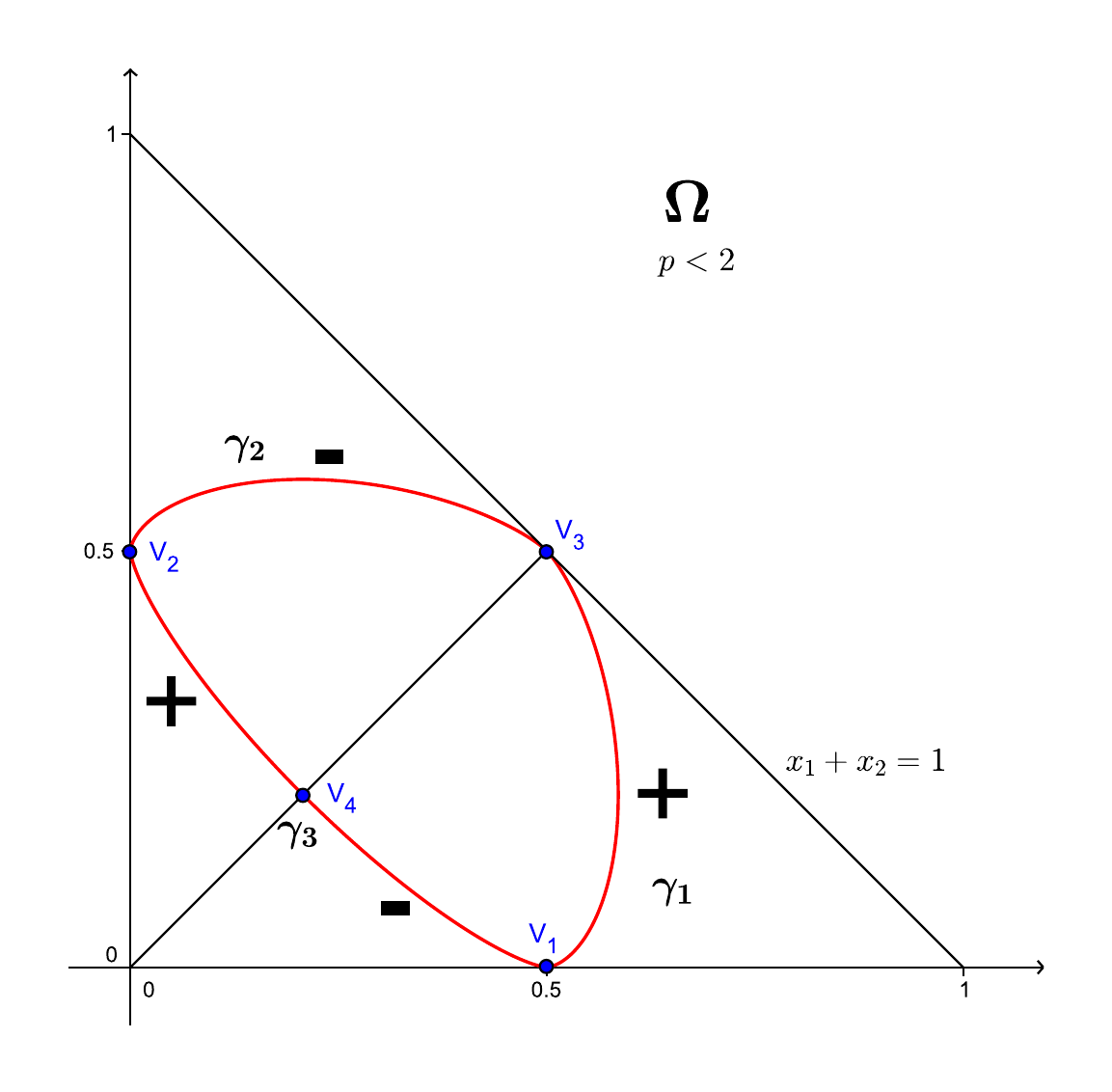}
\caption{Domain $\Omega$ and signs of torsion.}
\label{torsign}
\end{figure}
 
The following remark is an easy, but important addition to Lemma~\ref{tor+-}.
\begin{rem}
Let $\I$ be a segment with the endpoints on $\partial\Omega$ such that the function $\B$ is linear on it. In such a case\textup, for any $\rho>0$ and any of the two closed arcs of $\partial\Omega$ subtended by $\I,$ there exists a segment $\I_1$ with the endpoints on this arc such that $0<|\I_1|<\rho$ and the function $\B$ is linear on $\I_1$.
\end{rem}
\begin{proof}
Assume the contrary, let the statement be incorrect for one of the closed arcs subtended by~$\I$. Take the chord~$\I_1$ with the endpoints on this arc in such a way that the function~$\B$ is linear on it and the chord~$\I_1$ subtends the shortest arc with this property (such an arc exists because the domain~$\Omega$ is compact and the function~$\B$ is continuous). Take any point $x_0 \in \intt(\Omega)$ that is separated by~$\I_1$ from~$\I$. By Corollary~\ref{fol}, we can find a segment or a triangle such that the function~$\B$ is linear on it, its endpoints lie on $\partial\Omega$, and it contains~$x_0$. Due to the minimality of the arc subtended by~$\I_1$, this segment or triangle intersects the chord~$\I_1$ in its interior points. Thus, we have found the chord~$\I_2$ such that the function~$\B$ is linear on it and $\I_1\cap \I_2\cap \intt(\Omega)\ne \varnothing$. But in such a case, the function $\B$ has to be linear on $\conv(\I_1\cup \I_2)$. This also allows us to find a chord subtending a shorter arc than~$\I_1$ such that~$\B$ is linear on it. A contradiction.
\end{proof}

Together with Lemma~\ref{tor+-}, this remark implies the following Corollary.
\begin{cor}
Suppose $\I$ is a segment such that its endpoints are on $\partial\Omega$ and the function $\B$ is linear on it. In such a case\textup, there exists a point where the torsion of the graph of~$F$ changes its sign from~$+$ to~$-$ \textup(in the  counter-clockwise orientation\textup) on both sides of~$\I$. 
\end{cor}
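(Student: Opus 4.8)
The plan is to derive the Corollary from Lemma~\ref{tor+-} by pushing the given chord against the boundary using the preceding Remark. The preliminary bookkeeping step is to read off from the explicit formulas for $\tau^{[1]},\tau^{[2]},\tau^{[3]}$ the global sign pattern of the torsion of the graph of $F$ along $\partial\Omega$. Call a point of $\partial\Omega$ a \emph{break point} if it is one of the three corners of $\Omega$ (where two of the arcs $\gamma^{[1]},\gamma^{[2]},\gamma^{[3]}$ meet) or the point $\gamma^{[3]}(\tfrac12)$; there are four of them, and on each of the four open arcs they determine the torsion is $C^\infty$ and of constant nonzero sign. Moreover, traversing $\partial\Omega$ counter-clockwise --- which, as one checks from \eqref{gamma1}--\eqref{gamma3}, means traversing $\gamma^{[1]}$, then $\gamma^{[2]}$, then $\gamma^{[3]}$, each with $s$ increasing --- the torsion changes from $+$ to $-$ at exactly two break points (for $p>2$, at the corners $(\tfrac12,0)$ and $(0,\tfrac12)$; for $p<2$, at the corner $(\tfrac12,\tfrac12)$ and at $\gamma^{[3]}(\tfrac12)$), and from $-$ to $+$ at the other two. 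Let $d_0>0$ be the least distance between two break points, and note that we may assume $p\ne2$ (for $p=2$ the function $\B$ is linear and there is nothing to prove).

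Fix the chord $\I$ and one of the two closed arcs $\alpha$ subtended by it; the goal is to exhibit a ``$+$ to $-$'' break point on $\alpha$. By compactness and strict convexity of $\Omega$, choose $\rho>0$ so small that every chord of length less than $\rho$ subtends an arc of $\partial\Omega$ of diameter smaller than both $d_0$ and the diameter of the other arc subtended by $\I$. The Remark, applied to $\I$ and the arc $\alpha$, yields a chord $\I_1\subset\alpha$ with $0<|\I_1|<\rho$ on which $\B$ is linear; we may further arrange (by one more application of the Remark, or by a small perturbation) that the endpoints of $\I_1$ are not break points. Let $\beta$ be the arc subtended by $\I_1$ that lies inside the triangle cut off by the tangents to $\Omega$ at the endpoints of $\I_1$. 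The choice of $\rho$ forces $\beta\subset\alpha$ and $\operatorname{diam}\beta<d_0$, so $\beta$ contains at most one break point, which --- if it exists --- is interior to $\beta$.

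The conclusion is now a single application of Lemma~\ref{tor+-}, with $\omega=\Omega$, $G=\B$, the endpoints $a_1,a_2$ of $\I_1$, and a parametrisation of a short open arc around $\beta$ that meets no break point outside $\beta$: its hypotheses hold because $\B$ is concave, linear on $\I_1=[a_1,a_2]$, and the curve $(\gamma,\B\circ\gamma)$ is $C^1$. If $\beta$ contained no break point, that curve would be $C^3$ on the whole parametrised arc with torsion of constant sign, i.e. case~1) or case~2) of the lemma --- impossible. Hence $\beta$ contains exactly one break point $t_*$, interior to $\beta$. If $t_*$ were a ``$-$ to $+$'' break point, then, the curve being $C^1$ throughout and $C^3$ off $t_*$ (each $\gamma^{[i]}$ being $C^\infty$) with torsion negative before $t_*$ and positive after it, we would be in case~3) of the lemma --- again impossible. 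Therefore $t_*$ is a ``$+$ to $-$'' break point lying on $\alpha$; running the same argument on the other arc subtended by $\I$ finishes the proof.

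The difficulties here are of bookkeeping rather than of ideas. One must match the counter-clockwise orientation of the statement with the orientation in which the signs of the $\tau^{[i]}$ are computed, and one must be sure the graph of $F$ is genuinely only $C^1$ (not $C^3$) at a corner, so that the torsion really does jump there and the two ``$-$ to $+$'' break points fall under case~3) of Lemma~\ref{tor+-}, while the ``$+$ to $-$'' break points --- near which the torsion is $+$ then $-$, a configuration the lemma does \emph{not} rule out --- survive as the sought point $t_*$. One should also take some care that the auxiliary chords supplied by the Remark can be taken with endpoints away from the break points; this is what lets us place $t_*$ in the interior of $\beta$, which is needed for case~3) of the lemma to apply.
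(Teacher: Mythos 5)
Your route is exactly the one the paper intends (the paper itself gives no written proof beyond saying that the Remark together with Lemma~\ref{tor+-} implies the Corollary): use the Remark to produce an arbitrarily short chord on the chosen arc, note that its short subtended arc contains at most one of the four points where the torsion can change sign, and let Lemma~\ref{tor+-} exclude the patterns ``constant sign'' and ``$-$ then $+$'', leaving a ``$+$ to $-$'' change inside that arc, hence on the original arc. Your bookkeeping is correct: traversing $\gamma^{[1]},\gamma^{[2]},\gamma^{[3]}$ with increasing $s$ is indeed counter-clockwise, the ``$+$ to $-$'' points are at $(\tfrac12,0)$ and $(0,\tfrac12)$ for $p>2$ and at $(\tfrac12,\tfrac12)$ and $\gamma^{[3]}(\tfrac12)$ for $p<2$, and for a short chord the arc inside the tangent triangle is the small-diameter one, so your choice of $\rho$ does force $\beta\subset\alpha$.

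The one genuine gap is the sentence ``we may further arrange (by one more application of the Remark, or by a small perturbation) that the endpoints of $\I_1$ are not break points''. Neither justification works: a chord on which $\B$ is linear cannot be perturbed while keeping linearity (the family of such chords is rigid), and a second application of the Remark merely returns some chord with endpoints on $\beta$ --- nothing prevents it from again ending at the same break point, and iterating does not terminate. You do need something here, because if an endpoint of $\I_1$ is a break point then none of the cases 1\textup)--3\textup) of Lemma~\ref{tor+-} literally applies: cases 1\textup)--2\textup) require $C^3$ on all of $\I$, while case 3\textup) requires the exceptional point to be interior. The clean repair is to treat that configuration directly: if the break point at an endpoint of $\I_1$ is of type ``$+$ to $-$'', it already lies on $\alpha$ and you are done; if it is of type ``$-$ to $+$'', the torsion has constant sign on the open arc, and inspecting the proof of Lemma~\ref{tor+-} shows that cases 1\textup)--2\textup) only use $u(t_1)=u(t_2)=0$, i.e.\ the standing $C^1$ hypothesis at the endpoints, with $C^3$ and the torsion sign needed merely on $(t_1,t_2)$ (strict convexity/concavity of $(v,u)$ with zero endpoint values forces $f$ to be strictly monotone between two equal values). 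So the statement you rely on is repairable, but as written this step is unsupported; you should either prove this slightly strengthened form of the lemma or give a selection argument that genuinely avoids the break points. (A minor aside: for $p=2$ the torsion vanishes identically, so the Corollary read literally has no content; like the paper, you should simply restrict it to $p\ne2$ rather than say there is ``nothing to prove''.)
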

 
By the fact that when $p\ne 2$ there exists only two changes of sign of the torsion from $+$ to $-$, there does not exist a triangle such that its endpoints belong to $\partial \Omega$ and the function $\B$ is linear on it. Consequently, a chord such that its endpoints lie on~$\partial\Omega$ and~$B$ is linear on it passes through each point of $\Omega$. Moreover, these chords cannot intersect each other in interior points, because in such a case the function $\B$ would have been linear on the convex hull of such intersecting chords. Such a tiling of $\Omega$ by these disjoint chords is called a \emph{foliation}.

We are going to use the symmetry of our problem. The set  $\Omega$ and the boundary function $F$ do not change when the first two coordinates are permuted. So, the function $\B$ and the foliation also have this property. Pick a point $x\in \Omega$ such that $x_1=x_2$ and find the chord that contains it. By symmetry, this chord intersects the symmetric chord, so it is symmetric to itself. Thus, this chord either lies on the symmetry axis or is orthogonal to it. From both sides of this chord there are the points where the torsion changes sign from $+$ to $-$. As a result, we see that when $p>2$ the chord lies on the symmetry axis, and when $p<2$, it is perpendicular to this axis. This justifies Figure~\ref{foliation}, we have proved that the chords of the foliation are arranged as its is drawn there.
\begin{figure}[h!]
\includegraphics[height=6.5cm]{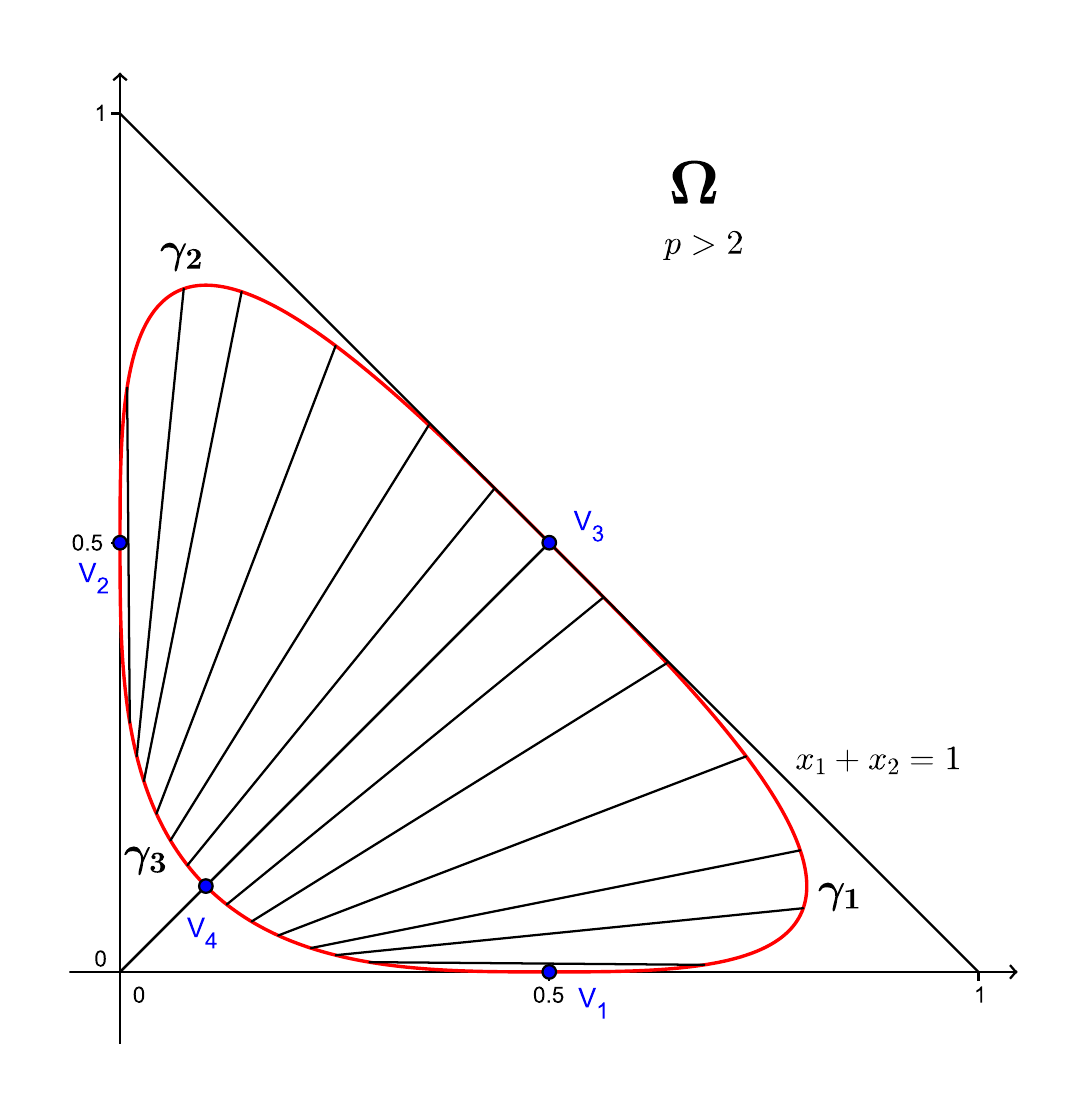}
\includegraphics[height=6.5cm]{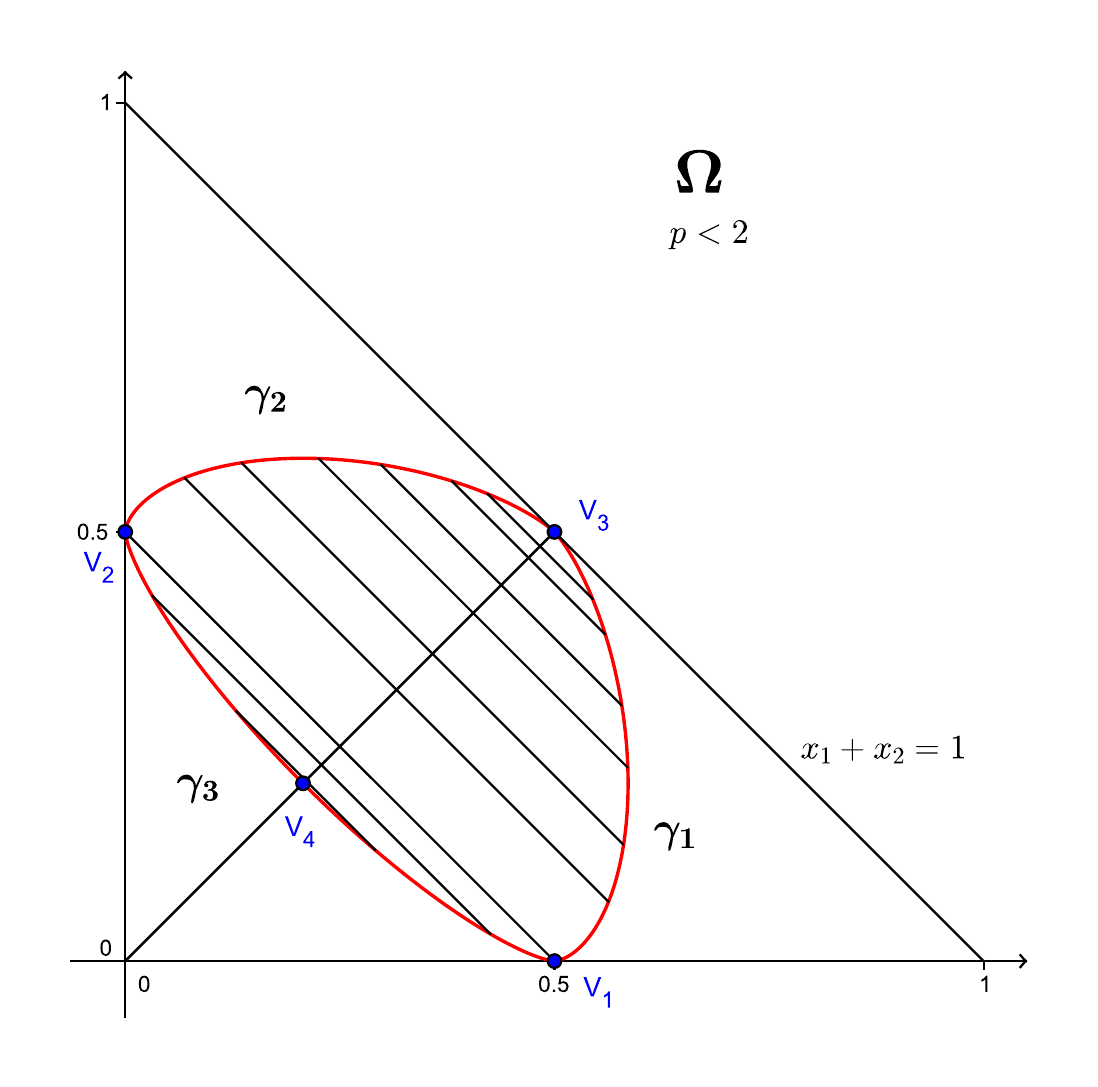}
\caption{Фолиация.}
\label{foliation}
\end{figure}
  
\section{Computations and answer}
Now we are equipped enough to calculate the values of $\B$ on the line $x_1=x_2$. In the case $p>2$, the function in question is linear on the said line. Therefore, using the boundary conditions~\eqref{bv}, we see that $\B(x_1,x_1)=(2+2^p)x_1-1$. Returning to the homogeneous function $\BB$, we obtain
$$
\BB(1,1,t)=(t+2)\BB\left(\frac{1}{t+2},\frac{1}{t+2},\frac{t}{t+2}\right)=(t+2)\B\left(\frac{1}{t+2},\frac{1}{t+2}\right)=2^p-t,
$$
which, in its turn, leads to the desired modulus of uniform convexity $\delta(\eps)$ via the equation
$$
2^p(1-\delta(\eps))^p=\sup\limits_{t\in [\eps^p,2^p]}\BB(1,1,t)=2^p-\eps^p.
$$

In the case where $p<2$, it is impossible to express the values of $\B$ on the line $x_1=x_2$ by an elementary formula. Due to the fact that the function $\B$ is linear on the chord~$c(t)$ that passes through the point $(t,t)$ and is symmetric on it,~$\B$ is constant on this chord~$c(t)$ and coincides with the boundary condition. If $2t\in [\frac{1}{2},1]$, then~$c(t)$ ends at the boundary $\gamma^{[1]}$, therefore, there exists a unique solution $s \in [0,1]$ of the equation $\gamma^{[1]}_1(s)+\gamma^{[1]}_2(s)=2t$, and 
$$
\B(t,t)=F\left(\gamma^{[1]}(s)\right)=\frac{(1+s)^p}{1+s^p+(1-s)^p}.
$$
If $2t \in \left[\frac{1}{2^{p-1}+1},\frac{1}{2}\right]$, then~$c(t)$ ends at the boundary $\gamma^{[3]}$, therefore, there exists a unique solution $s \in [\frac{1}{2},1]$ of the equation $\gamma^{[3]}_1(s)+\gamma^{[3]}_2(s)=2t$, and 
$$
\B(t,t)=F\left(\gamma^{[3]}(s)\right)=\frac{(2s-1)^p}{1+s^p+(1-s)^p}.
$$
As in the previous case, these equalities allow one to find the modulus of uniform convexity $\delta(\eps)$ from the equation
\begin{equation}\label{mono}
2^p(1-\delta(\eps))^p=\sup\limits_{t\in [\eps^p,2^p]}\BB(1,1,t)=\sup\limits_{t\in [\eps^p,2^p]}(t+2)\B\left(\frac{1}{t+2},\frac{1}{t+2}\right)=(\eps^p+2)\B\left(\frac{1}{\eps^p+2},\frac{1}{\eps^p+2}\right).
\end{equation}
The last equality in~\eqref{mono} follows from the monotonicity of the function $\B(t,t)/t$. This monotonicity can be justified with the help of an easy consideration. The function $\diag\colon t \mapsto \B(t,t)$ is defined on the interval~$\left[\frac{1}{2^p+2},\frac{1}{2}\right]$, is non-negative and concave on it, it is zero at the left endpoint. Therefore, the function $\diag(t)/t$ first increases  (till the moment when the tangent at the point~$(t,\diag(t))$ passes through zero), and then decreases. We have to verify that it grows till the point $t=\frac{1}{2}$. It follows from the inequality $\diag(t)/t \leq \diag(\frac{1}{2})/\frac{1}{2}$, which in its turn is equivalent to the inequality $\diag(t) \leq 2^pt$. The latter inequality can be justified with the help of the minimality of $\B$. Using formulas~\eqref{bv} and~\eqref{gamma1},~\eqref{gamma2},~\eqref{gamma3}, it is easy to see that the linear function $G(x_1,x_2)=2^{p-1}(x_1+x_2)$ majorizes $\B$ on $\partial\Omega$ and thus on the whole~$\Omega$.

\section{Further results}
To solve the initial problem, it is enough to calculate the values of the function $\B$ on the symmetry axis. In the case where $p<2$ all the chords are perpendicular to the symmetry axis; this fact allows to compute the values of the function $\B$ at any point. For this purpose, it suffices to find the endpoints of the chord passing through the point in question. In the case where $p>2$, the situation is more complicated. To calculate the values of the function $\B$ off the symmetry axis, one is forced to use additional considerations. The corresponding technique had been partly developed in~\cite{IOSVZ}, later was modified to fit the general situation; it will be set out in a forthcoming paper.

By using similar methods, one can calculate how big can the value~$\|\theta \phi + (1-\theta)\psi\|$ be when~$\|\phi\|$,~$\|\psi\|$ and~$\|\phi-\psi\|$ are fixed (here $\theta$ is some fixed number), or any other ``decent'' function  of~$\phi$ and~$\psi$ (by calculation we mean that the answer can be represented as an implicit function expressing~$\delta$ in terms of~$\eps$, e.g. as in Theorem~\ref{ClarksonBeurlingHanner}). 



\section{Acknowledgment}
We are greatful to N.~K.~Nikolskii, who, in his lecture in P.~L.~Chebyshev Laboratory, attracted our attention to this topic, in particular, to the paper~\cite{BallPisier}, which was the starting point of our studies. We also thank F.~V.~Petrov and~D.~S.~Chelkak for useful comments and remarks.

The last but not least, we are grateful to our teacher V.~I.~Vasyunin.   

%


Paata Ivanisvili

Department of Mathematics, Michigan State University, East Lansing, MI
48823, USA

ivanishvili.paata@gmail.com
\medskip

Dmitriy M. Stolyarov

St. Petersburg Department of Steklov Mathematical Institute, Russian Academy of Sciences (PDMI RAS);
P. L. Chebyshev Research Laboratory, St. Petersburg State University.

dms@pdmi.ras.ru

\medskip
Pavel B. Zatitskiy

St. Petersburg Department of Steklov Mathematical Institute, Russian Academy of Sciences (PDMI RAS);
P. L. Chebyshev Research Laboratory, St. Petersburg State University.

paxa239@yandex.ru
\end{document}